\documentclass{amsart} 
\usepackage{pgf,tikz,hyperref,comment}
\usepackage{amsmath, graphicx}
\usepackage{amssymb,color,xcolor}
\usepackage{enumitem}
\usepackage{longtable}

\numberwithin{equation}{section}

\newtheorem{theorem}{Theorem}[section]
\newtheorem{proposition}[theorem]{Proposition}
\newtheorem{lemma}[theorem]{Lemma}
\newtheorem{corollary}[theorem]{Corollary}

\newtheorem*{theorem*}{Theorem}

\newtheorem{assump}{Theorem}

\theoremstyle{definition}

\newtheorem{prob*}{Problem}

\newtheorem{definition}[theorem]{Definition}

\newtheorem{example}[theorem]{Example}
\newtheorem{remark}[theorem]{Remark}

\newtheorem{thevarthm}[theorem]{\varthmname}

\newenvironment{varthm*}[1]{\trivlist\item[]{\bf #1.}\it}{\endtrivlist}

\newcommand{\PP}{ \ensuremath{\mathbb{P}}}

\newcommand{\field}{K}
\newcommand{\acfield}{\overline{\field}}
\newcommand{\calm}{\mathcal M}

\begin{document}

\author[Chiantini]{Luca Chiantini}
\address[L.~Chiantini]{Dipartmento di Ingegneria dell'Informazione e Scienze Matematiche, Universit\`a di Siena, Italy}
\email{luca.chiantini@unisi.it}

\author[Farnik]{{\L}ucja Farnik}
\address[{\L}.~Farnik]{Department of Mathematics, University of the National Education Commission, Krakow,
   Podcho\-r\c a\.zych~2,
   PL-30-084 Krak\'ow, Poland}
\email{lucja.farnik@gmail.com}

\author[Favacchio]{Giuseppe Favacchio}
\address[G.~Favacchio]{Dipartimento di Ingegneria, Universit\`a degli studi di Palermo,
Viale delle Scienze,  90128 Palermo, Italy}
\email{giuseppe.favacchio@unipa.it}

\author[Harbourne]{Brian Harbourne}
\address[B.~Harbourne]{Department of Mathematics,
University of Nebraska,
Lincoln, NE 68588-0130 USA}
\email{brianharbourne@unl.edu}

\author[Migliore]{Juan Migliore} 
\address[J.~Migliore]{Department of Mathematics,
University of Notre Dame,
Notre Dame, IN 46556 USA}
\email{migliore.1@nd.edu}

\author[Szemberg]{Tomasz Szemberg}
\address[T.~Szemberg]{Department of Mathematics, University of the National Education Commission, Krakow,
   Podcho\-r\c a\.zych~2,
   PL-30-084 Krak\'ow, Poland}
\email{tomasz.szemberg@gmail.com}

\author[Szpond]{Justyna Szpond}
\address[J.~Szpond]{Department of Mathematics, University of the National Education Commission, Krakow,
   Podcho\-r\c a\.zych~2,
   PL-30-084 Krak\'ow, Poland}
\email{szpond@gmail.com}

\thanks{Chiantini and Favacchio are members of the Italian GNSAGA-INDAM. The work of the Favacchio was supported by the funding PREMIO\_SINGO\-LI\_RIC\_[2025] from the Department of Engineering, University of Palermo. Migliore was partially supported by Simons Foundation grant \#839618.}

\title[Enumerative geometry of skew lines in $\PP^3$]{Enumerative geometry of skew lines in $\PP^3$ with a given associated finite group}

\begin{abstract}
For any finite set $\mathcal L$ of 3 or more skew lines in $\PP^3_{\acfield}$ over an algebraically closed field $\acfield$ of arbitrary characteristic, there is a canonical associated subgroup $G_{\mathcal L}$ of ${\rm PGL}_2(\acfield)$. Given a finite subgroup $G\subset{\rm PGL}_2(\acfield)$ we study 
 which configurations of lines have $G_{\mathcal L}=G$. We derive an upper bound on the number $|\mathcal L|$ of lines in terms of the order $|G|$ of the group $G$ and as an application we
classify up to projective equivalence which sets $\mathcal L$ in $\PP^3_{\mathbb C}$ have $G_{\mathcal L}=G$ for certain finite nonabelian groups $G$.
\end{abstract}

\keywords{
special configurations of lines,
finite subgroups of ${\rm PGL}_2$,
groupoids,
spreads,
complete intersections, 
geproci}

\subjclass[2020]{
14N20, 
14N05, 
05B30, 
05E14, 
20G15, 
20L05, 
14M10} 

\date{July 3, 2026}

\maketitle

\section{Introduction}\label{Intro}

This paper establishes a new
problem in enumerative geometry and shows how work on
classifying sets of skew lines in $\PP^3_\field$ over finite fields $K$ (these are called \emph{spreads} in the literature) can be carried over to any field, including the complex numbers.

Let $K$ be a field and let $\overline K$ denote its algebraic closure.
The basic problem is to classify, up to projective equivalence, finite sets of skew lines in $\PP^3_{\field}$ over a field $\field$. 
This is  a finite classification problem when $\field$ is finite,
since there are only finitely many lines defined by pairs of linear forms in 4 variables over a finite field. Whereas, over any field, there is only one projective equivalence class of sets of 3 skew lines, over $\mathbb C$ there are, even for sets of only 4 skew lines, infinitely many projective equivalence classes.
(Given a set $\mathcal L$ of 4 skew lines in $\PP^3_\field$, 
there is always a line $T\subset\PP^3_{\acfield}$ meeting all of the lines in $\mathcal L$. The fact
over $\mathbb C$ that there are infinitely many projective equivalence classes of 4 skew lines 
is related to
the fact that there are infinitely many possible values of the cross ratio of the points of 
intersection of $T$ with the 4 lines in $\mathcal L$.)

However, as we explain in more detail below, 
for each finite set $\mathcal L$ of 3 or more skew lines in $\PP^3_{\acfield}$,
there is an associated group
$G_{\mathcal L}$ which is canonically a subgroup
of ${\rm Aut}(L)$ for each $L\in\mathcal L$.
(The group $G_{\mathcal L}$ is not finite in general, but 
it is finite if $\acfield$ is the algebraic closure of a finite field.) 
If we fix three skew lines (which we refer to below as $L_\infty, L_0$ and $L_1$) and consider only
sets $\mathcal L$ containing $L_\infty, L_0$ and $L_1$, then with respect to a fixed isomorphism 
${\rm Aut}(L)\cong {\rm PGL}_2(\acfield)$
and a fixed finite subgroup $G\subset {\rm PGL}_2(\acfield)$
our main result, Theorem \ref{mainThm}, shows that there is a unique minimal finite set of lines $\calm_G$ in $\PP^3_{\acfield}$ such that whenever $G_{\mathcal L}=G$ we always have
$\mathcal L\subseteq \calm_G$.

This shows that the well-studied problem of classifying spreads (i.e., sets of skew lines)
over finite fields can be carried over to any field
simply by restricting to lines with a given  finite group $G\subset {\rm PGL}_2(\acfield)$. The problem then becomes to classify subsets $\mathcal L'$ of the finite set $\calm_G$ such that $G_{\mathcal L'}=G$.
One can ask all of the same questions as is done with spreads over finite fields,
but now in the context of having a specified group $G$: 
what is the least number of lines for which we get a given group  $G$, what is the maximum 
number, which subsets of lines are maximal, which are minimal, how many subsets of lines of 
given cardinality are there up to projective equivalence that have the given group $G$. 

Given a finite set $\mathcal L=\{L_1,\ldots,L_s\}$ of $s\ge 3$ pairwise skew lines in
$\PP^3_{\overline K}$, the paper \cite{POLITUS3} associates to
$\mathcal L$ a groupoid $\mathcal G_{\mathcal L}$  whose objects are the lines of $\mathcal L$. (Recall that a groupoid is a category for which the morphisms are all invertible.) The basic
morphisms are obtained from triples of skew lines: if $L_i,L_j,L_k$ are
distinct and $p\in L_i$, the plane spanned by $p$ and $L_k$ meets $L_j$ in a
unique point $q$, giving an isomorphism
$f_{ijk}\colon L_i\longrightarrow L_j$
with $f_{ijk}(p)=q$.
Taking all such maps and their compositions, the maps $L_i\to L_j$ that we get in this way 
comprise the Hom set ${\rm Hom}_{\mathcal G_{\mathcal L}}(L_i,L_j)$ of the groupoid $\mathcal G_{\mathcal L}$ associated to
$\mathcal L$.

For each line $L_i\in\mathcal L$, the automorphisms ${\rm Hom}_{\mathcal G_{\mathcal L}}(L_i,L_i)$ of $L_i$ in this groupoid form a subgroup of
\[
{\rm  Aut}(L_i)\cong {\rm PGL}_2(\overline K).
\]
These groups are naturally isomorphic for different choices of $L_i$, and we denote this common group by $G_{\mathcal L}$. Thus $G_{\mathcal L}$ is a projective invariant of the configuration of skew lines. The purpose of this note is to study how many different configurations ${\mathcal L}$ have isomorphic  finite groups $G_{\mathcal L}$. Of particular interest is the case that $G_{\mathcal L}$ is nonabelian, since \cite{POLITUS3} has given a good foundation for understanding the abelian case. By \cite{Favacchio}, if $\field=\mathbb C$ and 
$G_{\mathcal L}$ is finite nonabelian,
then $G_{\mathcal L}$ is either $A_4$,
$S_4$ or $A_5$.

We will use the matrix description of $G_{\mathcal L}$ developed in \cite{Favacchio}. Fix coordinates
$x,y,z,w$ on $\PP^3_\field$ and set
\[
L_\infty=V(x,y),\qquad L_0=V(z,w).
\]
For a matrix
\[
M=\begin{pmatrix}a&b\\ c&d\end{pmatrix}\in \mathrm{GL}_2(K),
\]
we denote by $L_M\subset \PP^3_\field$ the line defined by
\[
z=ax+by,\qquad w=cx+dy.
\]
Thus $L_{I_2}$ is the line defined by $z=x$ and $w=y$.

Since any three skew lines in $\PP^3_\field$ are projectively equivalent,
after a projective change of coordinates we may assume that three of the lines
in our configuration are
$L_\infty, L_0, L_{I_2}$.
Every further line disjoint from $L_\infty$ and $L_0$ is then uniquely of the
form $L_M$ for some $M\in \mathrm{GL}_2(K)$. Moreover, two such lines $L_M$ and
$L_N$ are skew if and only if $M-N$ is invertible.

In these coordinates, if
\[
\mathcal L=\{L_\infty,L_0,L_{M_1},L_{M_2},\ldots,L_{M_r}\},
\qquad M_1=I_2,
\]
then \cite{Favacchio} shows that the group $G_{\mathcal L}$ is generated in
${\rm PGL}_2(\overline K)$ by the classes $[M_i]$ and $[M_i-M_j]$ modulo scalars of the matrices
\[
M_i \quad\text{for}\quad  1\le i\le r
\qquad\text{and}\qquad
M_i-M_j \quad\text{for}\quad 1\le i<j\le r.
\]
This gives a concrete way to compute and constrain $G_{\mathcal L}$.

A notable feature of this description is that the generators of
$G_{\mathcal L}$ are not obtained only from products of the matrices $M_i$.
They also involve the differences $M_i-M_j$. At first sight this is somewhat
unexpected from a group-theoretic point of view, where one usually expects
closure under multiplication only. Here the appearance of differences is related to
the geometry: the condition that the lines $L_{M_i}$ and $L_{M_j}$ are skew is
equivalent to the invertibility of $M_i-M_j$. Thus each such difference defines
a well-defined element of $\mathrm{PGL}_2(K)$, which arises in defining the maps $f_{ijk}$ \cite{Favacchio}.

We are interested in the natural realization problem: for a fixed finite
subgroup $G\subset {\rm PGL}_2(\overline K)$, describe all
configurations of skew lines $\mathcal L$ for which $G_{\mathcal L}\cong G$.
The matrix description above makes this a concrete problem about finite sets of
matrices whose classes, together with the classes of their differences, generate
the prescribed group.

As a first simple case, scalar matrices $cI_2$, $c\in\acfield$, $c\neq0$, give only the trivial group. 
Indeed, given $L_\infty, L_0$, $L_{M_1},\ldots,L_{M_r}$ for distinct scalar matrices $M_i=\lambda_i I_2$ with 
$\lambda_1=1$, the matrices $M_i$ and their differences $M_j-M_i$, $i<j$, all map to the 
identity in ${\rm PGL}_2(\acfield)$, hence generate the identity subgroup. Geometrically, each $L_{M_i}$ is given by 
$z=\lambda_i x, w=\lambda_i y$ and hence belongs to the same ruling of the quadric $xw-yz=0$, as do $L_\infty$ 
and $L_0$. Conversely, if a finite set of skew lines is contained in one ruling of a smooth quadric, 
the group is trivial, thereby giving a geometric criterion for triviality \cite{POLITUS3}.

The case of four skew lines in $\PP^3_{\mathbb C}$ was treated explicitly in
\cite[Corollary~2.1.24, Theorem~3.5.9]{POLITUS3}: after fixing three skew lines
$L_1,L_2,L_3$ and an integer $m>2$, there are, up to projective equivalence,
only finitely many fourth lines $L_4$ such that the associated group $G_{\mathcal L}$ for
$\mathcal L=\{L_1,L_2,L_3,L_4\}$ has order $m$; in this case $G_{\mathcal L}$ is cyclic.

However, classifying sets of 3 or more skew lines $\mathcal L$ in $\PP^3_{\mathbb C}$ where $G_{\mathcal L}$ is finite abelian heavily used the fact that $G_{\mathcal L}$ is abelian if and only if there are two lines $T_1,T_2$ which each meet every line in $\mathcal L$. The lack of these transversals in the nonabelian case has resulted, up to now,
in not having methods to classify skew lines whose group is finite but nonabelian. Our main result, Theorem \ref{mainThm}, now provides the basis for classification in the nonabelian case (although the theorem applies and is of interest also in the abelian case).
We demonstrate this by showing,
up to projective equivalence, that there is a
unique set of skew lines $\mathcal L$ 
in $\PP^3_{\mathbb C}$ with $G_{\mathcal L}\cong A_4$ and we
find, up to projective equivalence, all sets of skew lines $\mathcal L$ in $\PP^3_{\mathbb C}$ with 
$G_{\mathcal L}\cong S_4$.
In the case of $A_4$, this unique set of lines consists of 5 lines.  
In the case of $S_4$, there is not a unique set of lines, but we find that every set of 3 or more lines with group $S_4$ has at least 5 and at most 10 lines.

We close this introduction with a few words of motivation.
This work arose from studying finite sets of points 
$Z\subset \PP^3_{\acfield}$ whose image $\overline{Z}$ under projection from a general point
to a plane is a complete intersection. Such sets are called geproci sets,
for ``GEneral PROjection is a Complete Intersection.''
In \cite{POLITUS3} it is shown that a major class of geproci sets
are groupoid orbits of points on finite sets of skew lines, but \cite{POLITUS3} left largely open the question of which sets of lines have nonabelian  associated finite groups, and thus left open the problem of classifying geproci sets which come from lines with nonabelian groups.

\section{Preliminaries}
We keep the notation used in the introduction. 
We begin by recalling the basic properties of this matrix description.
The next lemma is a standard way to parametrize the lines in $\PP^3$ disjoint from two fixed skew lines by $2\times 2$ matrices \cite[Lecture 6]{HarrisAG}. We include an elementary proof for completeness, since the criterion involving $M_1-M_2$ will be used throughout.

\begin{lemma}\label{MatrixLineAssoc}
If $L\subset \PP^3_{\acfield}$ is a line disjoint from $L_0$ and $L_\infty$, then $L=L_M$ 
for a unique matrix $M\in {\rm GL}_2(\acfield)$. 
Conversely, if $M\in {\rm GL}_2(\acfield)$, then $L_M$ is disjoint from $L_0$ and $L_\infty$.
Moreover, if $M_1, M_2\in {\rm GL}_2(\acfield)$, then $L_{M_1}$
and $L_{M_2}$ are disjoint if and only if $M_1-M_2$ is invertible.
\end{lemma}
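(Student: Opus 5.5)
We argue directly in coordinates, viewing a point of $\PP^3_{\acfield}$ as $(u,v)$ with $u=(x,y)^T$ and $v=(z,w)^T$ in $\acfield^2$, not both zero. Then $L_\infty=\{u=0\}$, $L_0=\{v=0\}$, and for $M\in{\rm GL}_2(\acfield)$ the line $L_M$ is $\{(u,Mu)\}$. In other words, $L_M$ is the projectivization of the graph of the linear map $u\mapsto Mu$, a $2$-dimensional subspace of $\acfield^4$.

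For the first claim, let $L$ be the projectivization of a $2$-dimensional subspace $W\subset\acfield^4$. Disjointness from $L_\infty$ means $W\cap(0\oplus\acfield^2)=0$. So the projection $\pi_1\colon W\to\acfield^2$, $(u,v)\mapsto u$, is injective, hence an isomorphism by dimension count. Therefore $W$ is the graph of $M:=\pi_2\circ\pi_1^{-1}$, and $M$ is uniquely determined by $W$: it is $\pi_2\circ\pi_1^{-1}$, and conversely the graph determines $M$. Disjointness from $L_0$ means $W\cap(\acfield^2\oplus 0)=0$. This says that $Mu=0$ forces $u=0$, so $M$ is invertible.

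For the converse, take $M$ invertible. The graph $\{(u,Mu)\}$ has dimension $2$. It meets $u=0$ only in $0$, and it meets $v=0$ only where $Mu=0$, that is, only at $u=0$. Hence $L_M$ is a line disjoint from both $L_\infty$ and $L_0$. It is also defined by the two independent linear forms $z-ax-by$ and $w-cx-dy$, matching the paper's description.

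For the last claim, a common point of $L_{M_1}$ and $L_{M_2}$ is a nonzero vector lying in both graphs, that is, $(u,M_1u)=(u',M_2u')$. This forces $u=u'$ and $(M_1-M_2)u=0$ with $u\neq0$; here $u\ne 0$ because a vector with $u=0$ in a graph is zero. Conversely, a nonzero $u$ in $\ker(M_1-M_2)$ gives the common point $(u,M_1u)$. So the lines meet exactly when $M_1-M_2$ is singular.

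The only point needing care is the uniqueness and dimension bookkeeping in the first step, namely recognizing that disjointness from $L_\infty$ is what makes $W$ a graph over the $u$-coordinates. Everything else is immediate linear algebra.
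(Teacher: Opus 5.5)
Your proof is correct and is essentially the paper's argument. The paper works with the two defining linear equations and shows that disjointness from $L_\infty$ makes the $2\times 2$ block of $(z,w)$-coefficients invertible, which is the dual of your observation that projecting $W$ onto the $(x,y)$-coordinates is an isomorphism (so $W$ is a graph). The invertibility of $M$ from disjointness with $L_0$, the uniqueness, and the criterion via $\ker(M_1-M_2)$ are handled the same way in both.
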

 
\begin{proof}
Every line $L$ is defined by linearly independent equations $$Cz+Dw+Ax+By~=~0,\qquad C'z+D'w+A'x+B'y~=~0.$$
We can write this as the matrix equation 
$$
\begin{pmatrix}
C & D & A & B\\
C' & D' & A' & B'\\
\end{pmatrix}
\begin{pmatrix}
z\\
w\\
x\\
y\\
\end{pmatrix}
=0.
$$
Set 
$$P=\begin{pmatrix}
C & D\\
C' & D'
\end{pmatrix}.$$
The assumption that $L$ is disjoint from $L_\infty$ implies $P$ must have rank $2$.
	Indeed, if $P$ had rank less than $2$, there would be a nonzero vector
	$(z_0,w_0)$ satisfying
$P\begin{pmatrix}z_0\\w_0\end{pmatrix}=0,$
	and hence a point $[0:0:z_0:w_0]\in L\cap L_\infty$. 

Since $P$ has rank $2$, we can multiply the matrix equation
defining $L$ on the left by
	$-P^{-1}$. This gives a matrix equation for $L$ equivalent to
	\[
	z=ax+by,\qquad w=cx+dy,
	\]
    for some scalars $a,b,c,d$,
	which shows $L=L_M$ for the matrix $M=\begin{pmatrix}a&b\\ c&d\end{pmatrix}$.
The assumption that $L$ is disjoint from $L_0$ forces $M$ to be invertible.
	Indeed, a point of $L\cap L_0$ would have the form $[x_0:y_0:0:0]$ with
	$(x_0,y_0)\neq (0,0)$ and would satisfy $M\begin{pmatrix}x_0\\y_0\end{pmatrix}=0.$
To see that $M$ is unique, assume $L_M=L_N$ for invertible matrices $M$ and $N$.
Then for each $x_0$ and $y_0$ we get a unique point $[x_0:y_0:z_0:w_0]\in L_M$ where 
$$
\begin{pmatrix}
z_0\\
w_0\\
\end{pmatrix}=
M\begin{pmatrix}
x_0\\
y_0\\
\end{pmatrix}.$$
Since $[x_0:y_0:z_0:w_0]\in L_N$ we also have 
$$
\begin{pmatrix}
z_0\\
w_0\\
\end{pmatrix}=
N\begin{pmatrix}
x_0\\
y_0\\
\end{pmatrix}\qquad
\text{and hence}\qquad
M\begin{pmatrix}
x_0\\
y_0\\
\end{pmatrix}=
N\begin{pmatrix}
x_0\\
y_0\\
\end{pmatrix}$$
for all $x_0$ and $y_0$, so $M=N$.

Conversely, if $M$ is an invertible matrix, then $L_M\cap L_0=\emptyset$ and $L_M\cap L_\infty=\emptyset$
since on $L_M$ we have $z=w=0$ if and only if $x=y=0$.

The last claim is \cite[Lemma 2.2]{Favacchio}. For convenience we include the proof.
A point $[u_0:u_1:u_2:u_3]$ is in $L_{M_i}$ if and only if
$M_i\begin{pmatrix} u_0\\ u_1\end{pmatrix}=\begin{pmatrix} u_2\\ u_3\end{pmatrix}$
(in which case $\begin{pmatrix} u_0\\ u_1\end{pmatrix}\neq \begin{pmatrix} 0\\ 0\end{pmatrix}$).
Thus $[u_0:u_1:u_2:u_3]\in L_{M_1}\cap L_{M_2}$ if and only if $(M_1-M_2)\begin{pmatrix} u_0\\ u_1\end{pmatrix}=0$,
hence if and only if $M_1-M_2$ is not invertible.
\end{proof}

\begin{definition}\label{def:GL-matrix}
	Let $\mathcal L$ be a finite set of at least three skew lines in
	$\PP^3_{\acfield}$. After a projective change of coordinates, we may assume
	that $L_\infty,   L_0,  L_{I_2}$
	belong to $\mathcal L$. Thus we may write
	\[
	\mathcal L=\{L_\infty,L_0,L_{M_1},L_{M_2},\ldots,L_{M_r}\},\qquad M_1=I_2
	\]
	where $M_i\in {\rm GL}_2(\acfield)$ and $M_i-M_j$ is invertible for all
	$i\neq j$.
	
	We define $G_{\mathcal L}$ to be the subgroup of
	${\rm PGL}_2(\acfield)$ generated by the classes of the matrices
	\[
M_i \quad\text{for}\quad  1\le i\le r
\qquad\text{and}\qquad
M_i-M_j \quad\text{for}\quad 1\le i<j\le r.
\]
\end{definition}

\begin{remark}\label{IndependentRem}
	The group $G_{\mathcal L}$ defined above agrees with the group associated to
	$\mathcal L$ in \cite{POLITUS3}. In \cite{POLITUS3}, for any $L\in \mathcal L$, the group is defined 
	as ${\rm Hom}_{\mathcal G_{\mathcal L}}(L,L)\subset {\rm Aut}(L)$ for
	the groupoid $\mathcal G_{\mathcal L}$ (see \cite{POLITUS3}, \cite{Favacchio} and Remark \ref{rem:GroupoidForS_4}). 
    The groupoid, and hence the
	containment ${\rm Hom}_{\mathcal G_{\mathcal L}}(L,L)\subset {\rm Aut}(L)$,
	is independent of any choice of coordinates on $\PP^3_{\acfield}$. The matrix
	description used here is the one developed in \cite{Favacchio}, and gives an
	explicit set of generators for the same group.
\end{remark}

\begin{example}\label{S4Example}
Here we give an example of a set $\mathcal L$ of 10 skew lines in $\PP^3$ over the complex numbers. Their group $G_{\mathcal L}$ is $S_4$. 
The 10 lines consist of $L_\infty$, $L_0$ and $L_1=L_{M_1}=L_{I_2}$, and in addition, lines
$L_{M_2},\ldots,L_{M_8}$ where $M_i\in {\rm GL}_2(\mathbb C)$ for each $i$. Each matrix maps, modulo scalar matrices, to an element of a fixed
$S_4\subset {\rm PGL}_2(\mathbb C)$.
(We write $M\equiv N$ to denote that $M$ and $N$ are equivalent modulo scalar matrices.) Below we give a list of representatives $U_i$ of this $S_4$.
The first four elements give the Klein four group,
the first 12 give $A_4$, and the last 12 map to
odd permutations in $S_4$. The last twelve are of the form $U_{13}U_i$ for $1\leq i\leq 12$
(except we take $U_{14}=-U_{13}U_2$).
Note that $U_i$ for $i=13,14,17,20,22,23$
map to elements of $S_4$ of order 4, while the $U_i$ for
$i=15,16,18,19,21,24$ map to 2-cycles.
We will revisit the $U_i$ later when we analyze lines having group $A_4$:

{\footnotesize
$$
\renewcommand{\arraystretch}{1.35}
\begin{array}{l}
U_1=I_2, \
U_2^{-1}\equiv U_2=\begin{pmatrix}0&-1\\ 1&0\end{pmatrix},\ 
U_3^{-1}\equiv U_3=\begin{pmatrix}t&t^*\\ t^*&-t\end{pmatrix},\ 
U_4^{-1}\equiv U_4=\begin{pmatrix}-t^*&t\\ t&t^*\end{pmatrix}, \\

U_5=\begin{pmatrix}t&1\\ 0&t^*\end{pmatrix}, \
U_5^{-1}\equiv U_6=\begin{pmatrix}t^*&-1\\ 0&t\end{pmatrix}, \
U_7=\begin{pmatrix}t^*&0\\ -1&t\end{pmatrix}, \
U_7^{-1}\equiv U_8=\begin{pmatrix}t&0\\ 1&t^*\end{pmatrix}, \\

U_9=\begin{pmatrix}1&-t\\ t^*&0\end{pmatrix}, \
U_9^{-1}\equiv U_{10}=\begin{pmatrix}0&t\\ -t^*&1\end{pmatrix}, \
U_{11}=\begin{pmatrix}0&-t^*\\ t&1\end{pmatrix}, \
U_{11}^{-1}\equiv U_{12}=\begin{pmatrix}1&t^*\\ -t&0\end{pmatrix},\\

U_{13}=\begin{pmatrix}1&-1\\ 1&1\end{pmatrix}, \ 
U_{13}^{-1}\equiv U_{14}=\begin{pmatrix}1&1\\ -1&1\end{pmatrix}, \ 
U_{15}^{-1}\equiv U_{15}=\begin{pmatrix}2t-1&1\\ 1&-2t+1\end{pmatrix}, \\

U_{16}^{-1}\equiv U_{16}=\begin{pmatrix}-1&2t-1\\ 2t-1&1\end{pmatrix}, \ 
U_{17}=\begin{pmatrix}t&t\\ t&-t+2\end{pmatrix}, \ 
U_{18}^{-1}\equiv U_{18}=\begin{pmatrix}t^*&-t-1\\ t^*&-t^*\end{pmatrix}, \\

U_{17}^{-1}\equiv U_{19}=\begin{pmatrix}-t+2&-t\\ -t&t\end{pmatrix}, \ 
U_{20}^{-1}\equiv U_{20}=\begin{pmatrix}-t^*&-t^*\\ t+1&t^*\end{pmatrix}, \
U_{21}^{-1}\equiv U_{21}=\begin{pmatrix}t&-t\\ -t+2&-t\end{pmatrix}, \\

U_{22}=\begin{pmatrix}t^*&-t^*\\ -t^*&t+1\end{pmatrix}, \
U_{23}^{-1}\equiv U_{23}=\begin{pmatrix}-t&t-2\\ t&t\end{pmatrix}, \ 
U_{22}^{-1}\equiv U_{24}=\begin{pmatrix}t+1&t^*\\ t^*&t^*\end{pmatrix}.
\end{array}
$$
}
Here, $t$ and $t^*$ are the roots of $x^2-x+1=0$, hence $tt^*=t+t^*=1$.

These matrices modulo scalars are distinct and closed under multiplication so form a subgroup of ${\rm PGL}_2(\mathbb C)$ of order 24. It is nonabelian (for example, $U_5U_8\not\equiv U_8U_5$).
It cannot be a dihedral group by \cite[Theorem 4.3]{Favacchio}, so it must be $S_4$ by \cite[Theorem 7.1]{Faber}. 

The 10 lines for this example come from \cite[Example 3.2.10 (3)]{POLITUS3}. After normalizing with a suitable change of coordinates, the lines are $\mathcal L=\{L_\infty,L_0,L_{M_1},\ldots,L_{M_8}\}$,
where 
$M_1=I_2,M_2=\frac12 I_2,M_3=\frac12U_{13},M_4=\frac12U_{14},
M_5=\frac12U_{17},M_6=\frac12U_{19},M_7=\frac12U_{22}$ and $M_8=\frac12U_{24}$.

The group $G_{\mathcal L}$ is generated by the images of $M_1,\ldots,M_8$ and $M_{j,i}=M_j-M_i$ for $1\leq i<j\leq 8$. We have already seen that $M_i\equiv U_j$,
modulo scalars, for some $j$ for each $i$. Modulo scalars, we also have 
$M_2-M_1\equiv I_2$,
$M_4-M_3\equiv M_4-M_2\equiv M_3-M_2\equiv M_3-M_1=U_2$,
$M_4-M_1\equiv U_{13}$,
$M_5-M_1\equiv U_{19}$,
$M_6-M_1\equiv U_{17}$,
$M_8-M_7\equiv M_8-M_2\equiv M_7-M_2\equiv M_7-M_1\equiv U_3$,
$M_8-M_1\equiv U_{22}$,
$M_6-M_5\equiv M_6-M_2\equiv M_5-M_2\equiv U_4$,
$M_6-M_4\equiv M_5-M_3\equiv U_{18}$,
$M_5-M_4\equiv M_6-M_3\equiv U_{20}$,
$M_8-M_4\equiv M_7-M_3\equiv U_{21}$,
$M_7-M_4\equiv M_8-M_3\equiv U_{23}$,
$M_8-M_6\equiv M_7-M_5\equiv U_{15}$
$M_7-M_6\equiv M_8-M_5\equiv U_{16}$.
The subgroup generated by the images of these matrices includes all 4-cycles of $S_4$, hence must be $S_4$. 
In fact, from the data above one can also check 
for any subset $\{L_\infty,L_0,L_{M_1}\}\subseteq \mathcal L'\subseteq\mathcal L$ that
$G_{\mathcal L'}$ is $S_4$ if and only if
$\mathcal L'$ includes two lines $L_{M_i}, L_{M_j}$
with $2<i<j$ such that $M_iM_j\not\equiv I_2$ (note that two distinct elements of order 4 in $S_4$ generate
$S_4$ as long as they do not have trivial product; the ones with $M_iM_j\equiv I_2$ are $(i,j)=(3,4), (5,6), (7,8)$).
This means 
for any subset $\mathcal L'\subset \mathcal L$ containing 
$L_\infty,L_0,L_{M_1},L_{M_3}, L_{M_5}\}$ that $G_{\mathcal L'}=S_4$. In particular, this shows that there are at least
at least 6 projective equivalence classes of finite sets of skew lines with group $S_4$, since here we see examples with 5, 6, 7, 8, 9 and 10 lines, but in fact there are more since $\{L_\infty,L_0,L_{M_1},L_{M_2},L_{M_3},L_{M_5}\}$ and $\{L_\infty,L_0,L_{M_1},L_{M_3},L_{M_5},L_{M_7}\}$ both give $S_4$ but cannot be projectively equivalent. Indeed, $L_\infty,L_0,L_{M_1},L_{M_2}$ all lie on the same smooth quadric, $xw-yz=0$, but no smooth quadric contains any four of the lines $L_\infty,L_0,L_{M_1},L_{M_3},L_{M_5},L_{M_7}$.
\end{example}

\begin{remark}
Each line of $\mathcal L$  is naturally identified with
$\PP^1_{\acfield}$. Namely, $L_\infty$ and $L_0$ are parametrized by
\[
[u:v]\longmapsto [0:0:u:v],
\qquad
[u:v]\longmapsto [u:v:0:0],
\]
respectively, while $L_M$ is parametrized by
\[
[u:v]\longmapsto [u:v:u':v'],
\qquad
\begin{pmatrix}u'\\ v'\end{pmatrix}
=
M\begin{pmatrix}u\\ v\end{pmatrix}.
\]
Under these identifications, an element $g\in{\rm PGL}_2(\acfield)$ acts on
each line through its usual action on $\PP^1_{\acfield}$. Equivalently, choosing any representative $A\in{\rm GL}_2(\acfield)$ of $g$,
the action on the parameter is
\[
[u:v]\longmapsto \left[A\binom{u}{v}\right].
\]
Thus, on the line $L_M$, we have
\[
\left[\binom{u}{v}:M\binom{u}{v}\right]
\longmapsto
\left[A\binom{u}{v}:MA\binom{u}{v}\right]
\]where the notation
$
\left[\binom{x}{y}:\binom{z}{w}\right]
$
means the point $[x:y:z:w]\in \PP^3_{\acfield}$.

This is independent of the chosen representative $A$. In particular,
$G_{\mathcal L}\subset{\rm PGL}_2(\acfield)$ acts on every line of
$\mathcal L$.
\end{remark}

The next observation explains why, once a configuration is normalized to contain
$L_\infty,L_0,L_{I_2}$, conjugating the corresponding subgroup of
${\rm PGL}_2(\acfield)$ does not change the projective equivalence class of the
configuration.

\begin{remark}\label{rem:conjugation-projective}
	An automorphism of $\PP^3_{\acfield}$ preserves $L_\infty$, $L_0$, and $L_{I_2}$
	if and only if it is, up to scalar matrices, a block diagonal matrix
	\[\Psi_A=
	\begin{pmatrix}
		A&0\\
		0&A
	\end{pmatrix}
	\]
	where $A\in {\rm GL}_2(\acfield)$, so $\Psi_A$
	defines a projective automorphism $\psi_A$ of $\PP^3_{\acfield}$ preserving
	$L_\infty$, $L_0$, and $L_{I_2}$, and sends any line $L_M$ skew to $L_\infty$, $L_0$, and $L_{I_2}$ to $L_{AMA^{-1}}$.
	Thus conjugating a subgroup of ${\rm PGL}_2(\acfield)$ corresponds to a
	projective change of coordinates of the normalized configuration.
	Moreover, given a finite set of skew lines $\mathcal L=\{L_\infty, L_0, L_{I_2}, L_{M_4},\ldots, L_{M_s}\}$
	having group $G$, the lines 
	$$\mathcal L'=\{L_\infty, L_0, L_{I_2}, L_{AM_4A^{-1}},\ldots, L_{AM_sA^{-1}}\}$$
	have group $aGa^{-1}\subset {\rm PGL}_2(\acfield)$, where $a$ is the image of $A$ in ${\rm PGL}_2(\acfield)$.
\end{remark}

\begin{remark}\label{3LinesTo3LinesRem}
Given 3 skew lines $\ell_1,\ell_2,\ell_3$ in $\PP^3$,
to map them to $\ell_1'=L_\infty, \ell_2'=L_0, \ell_3'=L_{I_2}$,
pick two planes $H_1,H_2$ containing $\ell_3$. Let $p_{ij}$ be the points $\ell_i\cap H_j$ for $i=1,2$. Let $p_{3j}$ be the point of $\ell_3$ on the line through $p_{1j}$ and $p_{2j}$. Let $H_1',H_2'$ be planes containing $L_{I_2}$ and let $p_{ij}'$ be the points $\ell_i'\cap H_j'$, $i=1,2$. 
Let $p_{3j}'$ be the point of $\ell_3'$ on the line through $p_{1j}'$ and $p_{2j}'$. 
It is convenient to take 
$H_1'$ to be $x-z=0$ and $H_2'$ to be $y-w=0$; then $p_{11}'$ and $p_{12}'$ 
will be $[0:0:0:1]$ and $[0:0:1:0]$,
$p_{21}'$ and $p_{22}'$ 
will be $[0:1:0:0]$ and $[1:0:0:0]$,
and $p_{31}'$ and $p_{32}'$ 
will be $[0:1:0:1]$ and $[1:0:1:0]$.
There clearly are projective
transformations with $p_{ij}'\mapsto p_{ij}$ for $1\leq i,j\leq 2$
since we are mapping 4 linearly general points to 4 linearly general points. With the specific choices for $H_j'$ above, the matrices representing such projective transformations 
have columns where the first column is given (up to scalars) by $p_{22}'$, the second column
by $p_{21}'$, the third by $p_{12}'$ and the fourth by $p_{11}'$. We can then get a matrix that also maps $p_{3j}'\mapsto p_{3j}$ for $1\leq j\leq 2$ by multiplying each column by an appropriate scalar; this is because 
$p_{1j}',p_{2j}',p_{3j}'$ are collinear and so are 
$p_{1j},p_{2j},p_{3j}$. The inverse of the matrix
then defines a transformation taking $\ell_i$ to $\ell_i'$ for each $i$.
\end{remark}

\section{A finiteness result on the lines associated to a given group}

By \cite[Proposition~4.1]{Beauville2010}, finite subgroups of ${\rm PGL}_2(\acfield)$ of 
the same isomorphism type and of order prime to the characteristic are conjugate.
Thus by Remark \ref{rem:conjugation-projective}, such a conjugation is realized, in the normalized matrix model, by a projective change of coordinates. Hence, to study which sets of skew lines have group isomorphic to a given group $G$,
it is enough, up to projective equivalence, to fix one embedding $G\subset {\rm PGL}_2(\acfield).$

For this fixed subgroup $G$, Theorem \ref{mainThm} shows that there are only
finitely many normalized configurations
$\mathcal L=\{L_\infty,L_0,L_{I_2},\ell_4,\ldots,\ell_s\}$
with $G_{\mathcal L}= G$.

\begin{remark}\label{rem:representatives-matter}
A point which will be important in the proof is that a line $L_M$ depends on
the matrix $M\in {\rm GL}_2(\acfield)$, not only on its class
$[M]\in {\rm PGL}_2(\acfield)$. Thus, even after fixing a finite subgroup
$G\subset {\rm PGL}_2(\acfield)$, an element $g\in G$ represented by a matrix $M$ is also represented by $cM$ for every nonzero scalar $c$, but the corresponding lines $L_{cM}$ are by Lemma \ref{MatrixLineAssoc}
distinct for distinct values of $c$.
For instance, $I_2$ and $cI_2$ have the same image in ${\rm PGL}_2(\acfield)$,
but the lines
\[
L_{I_2}\colon z=x,\ w=y
\qquad\text{and}\qquad
L_{cI_2}\colon z=cx,\ w=cy
\]
are distinct if $c\neq 1$. Therefore, fixing the subgroup
$G\subset {\rm PGL}_2(\acfield)$ does not by itself give only $|G|+2$ possible
lines. In other words, Theorem \ref{mainThm} is not merely a count of the elements of $G$; it is a bound on the possible representatives of those elements which are
compatible with the geometry of the skew-line configuration.
\end{remark}

\begin{theorem}\label{mainThm}
Let $G$ be a finite nontrivial subgroup of ${\rm PGL}_2(\acfield)\cong{\rm Aut}(L_\infty)$. 
Then there is a unique minimal finite set 
$\calm_G$ of lines such that 
whenever
	$\mathcal L=\{L_\infty,L_0,L_{I_2},\ell_4,\ldots,\ell_s\}
	$
	is a set of skew lines with $G_{\mathcal L}= G$, then
	$
	\mathcal L\subseteq \calm_G.$
Moreover, 
$$|\calm_G|\le (|G|-2)(|G|-1)^2+2.$$
\end{theorem}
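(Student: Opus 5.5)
The plan is to define $\calm_G$ as the union of all sets of skew lines $\mathcal L\ni L_\infty,L_0,L_{I_2}$ with $G_{\mathcal L}=G$. By construction it contains every such $\mathcal L$, and any set with this property must contain it, so it is the unique minimal one. Everything then reduces to showing that this union is finite and satisfies the stated bound. By Lemma \ref{MatrixLineAssoc}, every line of such an $\mathcal L$ other than $L_\infty,L_0$ is $L_M$ for a unique $M\in{\rm GL}_2(\acfield)$. By Definition \ref{def:GL-matrix}, applied with $M_1=I_2$, any such $M\neq I_2$ satisfies $[M]\in G$ and $[M-I_2]\in G$. Fix once and for all a representative matrix $A_g\in{\rm GL}_2(\acfield)$ for each $g\in G$, with $A_1=I_2$. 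As Remark \ref{rem:representatives-matter} stresses, the real task is to bound the possible scalars $\lambda$ in $M=\lambda A_g$.

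\emph{Step 1: non-scalar matrices.} Let $S_1$ be the set of non-scalar $M\in{\rm GL}_2(\acfield)$ with $[M]=g\in G$ and $[M-I_2]=h\in G$. Write $M=\lambda A_g$ and $M-I_2=\mu A_h$, so that $\lambda A_g-\mu A_h=I_2$.
\begin{itemize}
\item Since $M$ is not scalar, $g\ne1$.
\item If $h=1$, then $\lambda A_g=(1+\mu)I_2$ is scalar, which is impossible.
\item If $h=g$, then $(\lambda-\mu)A_g=I_2$ forces $g=1$, which is impossible.
\end{itemize}
Hence $g\neq1$, $h\notin\{1,g\}$, and $A_g,A_h$ are linearly independent since their classes differ. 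The relation $I_2=\lambda A_g-\mu A_h$ therefore determines $(\lambda,\mu)$ uniquely, if it exists. This gives $|S_1|\le(|G|-1)(|G|-2)$.

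\emph{Step 2: scalar matrices.} If every $L_M\in\mathcal L$ had $M$ scalar, then $G_{\mathcal L}$ would be trivial, as noted in the introduction. Since $G$ is nontrivial, $\mathcal L$ contains some $L_N$ with $N\in S_1$, say $N=cA_g$ with $N-I_2=\mu A_h$. For any scalar line $L_{\lambda I_2}\in\mathcal L$, Definition \ref{def:GL-matrix} gives $\lambda I_2-N=\nu A_k$ for some $k\in G$. Arguing as in Step 1, $k=1$ would make $N$ scalar, and $k=g$ would give $\lambda I_2=(c+\nu)A_g$ with $g\ne1$; both are impossible. For each remaining $k$, the independence of $I_2$ and $A_k$ determines $\lambda$ uniquely. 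So each $N\in S_1$ allows at most $|G|-2$ scalar values. One of these is $k=h$, which yields $\lambda=1$, i.e.\ the line $L_{I_2}$ itself. Thus all scalar matrices occurring in $\calm_G$, including $I_2$, number at most $|S_1|(|G|-2)$.

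\emph{Step 3: counting.} Adding the two lines $L_\infty,L_0$ gives
\[
|\calm_G|\le 2+|S_1|+|S_1|(|G|-2)=2+|S_1|(|G|-1)\le (|G|-2)(|G|-1)^2+2 .
\]
The main subtlety is Step 2. Scalar matrices are not constrained by their relation to $I_2$ alone, since $[\lambda I_2-I_2]$ is always trivial. The argument must therefore anchor each scalar to a non-scalar line in the same configuration, which exists precisely because $G$ is nontrivial. It must also exclude the values $k=1$ and $k=g$, and note that $k=h$ recovers $I_2$, to reach the exact constant in the bound.
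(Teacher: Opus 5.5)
Your proposal is correct and follows the paper's proof essentially step for step. You take $\calm_G$ to be the union of all admissible configurations. You bound the non-scalar matrices by the unique solution of $\lambda A_g-\mu A_h=I_2$ with $g\neq 1$ and $h\notin\{1,g\}$. You then bound the scalar matrices by anchoring each one to a non-scalar line in the same configuration, which gives the same total $(|G|-1)(|G|-2)+(|G|-1)(|G|-2)^2+2$. Restricting Step 1 to non-scalar $M$ from the outset is, if anything, slightly cleaner than the paper's handling of the case $g=1$.
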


\begin{proof} 
There is clearly a unique minimal set $\calm_G$; it is the union 
of all sets of the form 
$$\mathcal L=\{L_\infty,L_0,L_{I_2},\ell_4,\ldots,\ell_s\}$$ 
such that $G_{\mathcal L}=G$.
The main point is to show that this union is finite, and to give the stated bound.
So assume $G=G_{\mathcal L}$ is a nontrivial finite group for some finite set 
$\mathcal L=\{L_\infty,L_0,L_{I_2},\ell_4,\ldots,\ell_s\}$ of skew lines.

Note that ${\rm GL}_2(\acfield)$ is the complement of a hypersurface in the vector space 
${\rm Mat}_{2\times 2}(\acfield)\cong\acfield^4$ of 2 by 2 matrices. Modding out by scalars
gives a map defined away from the zero matrix in ${\rm Mat}_{2\times 2}(\acfield)$
to $\PP^3_{\acfield}$ which maps
${\rm GL}_2(\acfield)$ to ${\rm PGL}_2(\acfield)$ as an open subset of $\PP^3_{\acfield}$.
For each $g\in G$, pick a representative $M_g\in {\rm GL}_2(\acfield)$ that maps to $g$.
(We do not require that $\det M_g=1$.  Note that $G$ is defined over a finite extension of the prime
field. To do computations, it is convenient to pick matrices $M_g$ defined over the same field as $G$.)
If $g,h\in G$ with $g\neq h$, then $M_g$ and $M_h$ are linearly independent (since otherwise
$cM_g=M_h$ for some nonzero scalar $c$, which implies $M_g$ and $M_h$ map to the same
element $g=h$ of $G$). For nonzero matrices $M,N\in{\rm Mat}_{2\times 2}(\acfield)$,
we will write $M\equiv N$ if they map to the same element of $\PP^3_{\acfield}$.

By Lemma \ref{MatrixLineAssoc} we have matrices $M_i\in{\rm GL}_2(\acfield)$ such that 
$\ell_i=L_{M_i}$ for $i\geq 4$, and we set $M_3=I_2$.
Then $G$ is generated by the images in ${\rm PGL}_2(\acfield)$ of
$M_3,\ldots,M_s$ together with $M_j-M_i$ for $3\leq i<j\leq s$.
In particular, for each matrix $M_i$ there is a $g_i\in G$ and a nonzero $c_i\in\acfield$ such that $M_i= c_iM_{g_i}$.

Now let $g,h\in G$ be such that no two of $1,g,h$ are equal and 
consider the equation $c_{g,h}M_g-I_2=c_hM_h$ (or equivalently $c_{g,h}M_g-c_hM_h=I_2$). Since $M_g$ and $M_h$
are linearly independent there is at most one solution $(c_{g,h},c_h)\in\acfield^2$.
Thus there are at most $(|G|-1)(|G|-2)$ pairs $(c_{g,h},c_h)\in\acfield^2$ such that $c_{g,h}M_g-I_2=c_hM_h$.

In particular, say $i>3$.  
Since $M_i-M_3$ maps to an element of $G$, we must have $M_i-M_3=c_iM_{g_i}-I_2\equiv M_g$
for some $g\in G$ and hence $c_iM_{g_i}-I_2=c_gM_g$ for some nonzero $c_g\in\acfield$. 
Thus we cannot have $g=1$ or $g=g_i$: if $g=1$, then $c_iM_{g_i}=(c_g+1)I_2$,
so $M_{g_i}\equiv I_2$ hence $g_i=1$ and so $i=3$ contrary to assumption, while if $g=g_i$, then 
$(c_i-c_g)M_{g_i}=I_2$ so again $g_i=1$. 
This means that $(c_i,c_g)$ is among the $(|G|-1)(|G|-2)$ pairs $(c_{g,h},c_h)\in\acfield^2$
identified above, so each matrix $M_i$ not congruent to $I_2$ comes from the set
of matrices of the form $c_{g,h}M_g$, and there are at most $(|G|-1)(|G|-2)$ matrices $c_{g,h}M_g$.

There remains to bound the number of matrices $M_i\equiv I_2$. For any such $M_i$ we have $M_i=c_iI_2$ for some $c_i\neq0$. For this 
let $c_{g,h}M_g$ be one of the $(|G|-1)(|G|-2)$ matrices as above with $g\neq1$ and consider
the equation $c_{g,h}M_g-c_1I_2=c_eM_e$ (or equivalently $c_eM_e+c_1I_2=c_{g,h}M_g$). 
There is no solution with $c_1\neq0$ when $e=g$ or $e=1$ (since $g\neq1$ implies $M_g$ and $I_2$ are linearly independent),
and for each $e$ with $1\neq e\neq g$, there is at most one solution $(c_e,c_1)$ and any such solution has $c_1\neq0$.
Thus altogether there are at most
$(|G|-1)(|G|-2)^2$ triples $(c_e,c_1,c_{g,h})$ with $c_1\neq0$ (since there are at most $(|G|-1)(|G|-2)$ matrices $c_{g,h}M_g$
and for each at most $|G|-2$ matrices $M_e$ for which there is a triple $(c_e,c_1,c_{g,h})$ with $c_1\neq0$).
To relate this to bounding the number of matrices $M_i\equiv I_2$ that can arise, note that
$G$ being nontrivial ensures there is some matrix $M_j=c_{g,h}M_g$ not congruent to $I_2$ among the
matrices $M_j$ defining the lines $L_{M_3},\ldots,L_{M_s}$. Moreover, given 
$M_i=c_1I_2$ for some $c_1\neq0$, the difference $M_j-M_i=c_{g,h}M_g-c_1I_2$ maps to an element
of $G$, hence $c_{g,h}M_g-c_1I_2=c_eM_e$ for some $e\in G$ and some scalar $c_e\neq0$,
hence $e\neq g, 1$. 
Thus $(c_e,c_1,c_{g,h})$ is among the $(|G|-1)(|G|-2)^2$ triples identified above.
In particular, there are at most $(|G|-1)(|G|-2)^2$ matrices $M$ congruent to $I_2$ that can arise 
as an $L_M$ in the set $\mathcal L$.

Thus altogether the matrices $M_3,\ldots,M_s$ are always elements of a fixed set
of at most $(|G|-1)(|G|-2)+(|G|-1)(|G|-2)^2=(|G|-1)^2(|G|-2)$ matrices.
Accounting for $L_\infty$ and $L_0$ gives the bound of $(|G|-1)^2(|G|-2)+2$.
\end{proof}

\begin{example}The bound $(|G|-1)^2(|G|-2)+2$ is: 
1212 for $G=A_4$;    11640 for $G=S_4$;
  201900 for $G=A_5$.
\end{example}

\begin{corollary}\label{c.  infinite or cyclic.}
Assume ${\rm char}(\acfield)=0$ and 
consider a set ${\mathcal L}=\{L_1,\ldots, L_s\}$ of distinct skew lines in $\PP^3_{\acfield}$.
If $s>201900$, then $G_{\mathcal L}$ is either an infinite group or finite cyclic.
\end{corollary}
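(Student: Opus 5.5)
The plan is to argue by contraposition: assume $s>201900$ and that $G_{\mathcal L}$ is finite and not cyclic, and derive a contradiction from Theorem \ref{mainThm}. Since $s\ge 3$, we first normalize. By Remark \ref{3LinesTo3LinesRem} there is a projective change of coordinates sending $L_1,L_2,L_3$ to $L_\infty,L_0,L_{I_2}$. By Remark \ref{IndependentRem}, $G_{\mathcal L}$ is a projective invariant up to isomorphism, so we may assume $\mathcal L=\{L_\infty,L_0,L_{I_2},\ell_4,\ldots,\ell_s\}$ with $G:=G_{\mathcal L}\subset{\rm PGL}_2(\acfield)$.

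Next we use the classification of finite subgroups of ${\rm PGL}_2(\acfield)$ in characteristic $0$: such a group is cyclic, dihedral, $A_4$, $S_4$ or $A_5$. Since ${\rm char}(\acfield)=0$ and $\acfield$ is algebraically closed, this is the same list as over $\mathbb C$ (for instance via the Lefschetz principle, or directly from \cite{Beauville2010}, \cite{Faber}). The dihedral case is excluded by \cite[Theorem 4.3]{Favacchio}, which the paper already invoked in Example \ref{S4Example} to rule out dihedral groups for configurations of skew lines. This is consistent with the introduction's statement, citing \cite{Favacchio}, that finite nonabelian $G_{\mathcal L}$ must be $A_4$, $S_4$ or $A_5$. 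One subtlety is the Klein four group $D_2$, which is abelian but not cyclic. I expect \cite[Theorem 4.3]{Favacchio} to cover all dihedral groups including $D_2$; if it does not, the abelian case still follows from the transversal description in \cite{POLITUS3}, where finite abelian groups arising this way are cyclic. Pinning down exactly which reference excludes $D_2$ is the step I expect to be the main obstacle.

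Once $G\in\{A_4,S_4,A_5\}$, we have $|G|\le 60$. Theorem \ref{mainThm} applies because $G$ is finite and nontrivial. It gives $s=|\mathcal L|\le|\calm_G|\le (|G|-2)(|G|-1)^2+2$. The function $n\mapsto (n-2)(n-1)^2+2$ is increasing for $n\ge 2$, so this is at most $58\cdot 59^2+2=201900$, contradicting $s>201900$. Hence $G_{\mathcal L}$ is either infinite or finite cyclic.
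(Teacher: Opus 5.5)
Your proposal is correct and follows the paper's proof: the classification of finite subgroups of ${\rm PGL}_2(\acfield)$ in characteristic $0$, exclusion of the dihedral case via \cite{Favacchio} (which the paper applies to all dihedral groups, so your Klein four worry is covered), and then the bound of Theorem~\ref{mainThm} for $|G|\le 60$, giving at most $201900$ lines. Your explicit normalization step and the monotonicity check of $(n-2)(n-1)^2+2$ simply spell out what the paper leaves implicit.
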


\begin{proof}
By \cite[Theorem C]{Faber}, if $G$ is a finite subgroup of ${\rm PGL}_2(\acfield)$ of order
prime to ${\rm char}(\acfield)$ (which always holds in characteristic 0), then $G_{\mathcal L}$ is cyclic, dihedral, or isomorphic to one of $A_4$, $S_4$, or $A_5$. However \cite{Favacchio} shows $G_{\mathcal L}$
is never dihedral. But for $G_{\mathcal L}$ to be either $A_4$, $S_4$ or $A_5$, by Theorem \ref{mainThm}
${\mathcal L}$ cannot have more than $|\calm_{G_{\mathcal L}}|$ lines. Since for 
$A_4$, $S_4$ or $A_5$ we always have $|\calm_{G_{\mathcal L}}|\leq 201\,900$, it follows that
$G_{\mathcal L}$ is either an infinite group or finite cyclic.
\end{proof}

This corollary suggests the possibility of classifying all sets of skew lines $\mathcal L$ in $\PP^3_{\mathbb C}$ for which 
$G_{\mathcal L}$ is finite and nonabelian. The fact that the bound on $|\calm_G|$ given by Theorem \ref{mainThm}
is rather large may make the prospect intimidating, but the bound is not very tight. In the next section we 
prove this by carrying out the classification for $A_4$ in characteristic $0,$ from which we see that $|\calm_{A_4}|=|A_4|-1=11$.

\begin{definition}
Given a finite subgroup $G\subseteq {\rm PGL}_2(\acfield)$, let 
$$\mathcal A_G=\{a\in {\rm PGL}_2(\acfield): aGa^{-1}=G\}.$$
\end{definition}

\begin{remark}
Note that ${\rm PGL}_2(\acfield)$ acts on ${\rm GL}_2(\acfield)$ by conjugation: for any elements $m\in {\rm PGL}_2(\acfield)$ and
$B\in {\rm GL}_2(\acfield)$, $MBM^{-1}$ is the same for all representatives $M\in {\rm GL}_2(\acfield)$ for $m$, so $mBm^{-1}$
is well defined. Likewise, ${\rm GL}_2(\acfield)$ acts by conjugation on ${\rm PGL}_2(\acfield)$.
\end{remark}

The following proposition exhibits some immediate properties.

\begin{proposition}\label{A_GProp}
Let $G$ be a subgroup of ${\rm PGL}_2(\acfield)$. We have the following.
\begin{itemize}
\item[(1)] $\mathcal A_G$ is a subgroup of ${\rm PGL}_2(\acfield)$ and $G$ is a subgroup of $\mathcal A_G$. 
\item[(2)] If $b\in{\rm PGL}_2(\acfield)$, then $\mathcal A_{bGb^{-1}}=b\mathcal A_Gb^{-1}$.
\item[(3)] $\mathcal A_G$ acts by conjugation on $\calm_G$.
\item[(4)] Assume $\acfield=\mathbb C$ with $\mathcal L=\{L_\infty,L_0, L_{I_2}, L_{M_4},\ldots, L_{M_s}\}$ being skew lines 
having group $G\subset {\rm PGL}_2(\mathbb C)$ where $G$ is finite. 
Then for each choice of $L_a,L_b,L_c\in\mathcal L$, there is a $\Lambda\in{\rm PGL}_4(\mathbb C)$
with $\Lambda(L_a)= L_\infty, \Lambda(L_b)= L_0, \Lambda(L_c)=L_{I_2}$, and for any such $\Lambda$,
there is a $B\in{\rm PGL}_2(\mathbb C)$ such that $\psi_B\Lambda\mathcal L$
has group $G$, where $\psi_B$ is as in 
Remark \ref{rem:conjugation-projective}. Moreover, if $\Lambda'\in{\rm PGL}_4(\mathbb C)$ also has 
$\Lambda'(L_a)= L_\infty, \Lambda'(L_b)= L_0, \Lambda'(L_c)=L_{I_2}$ where $\Lambda'\mathcal L$ has group $G$, then there is a matrix $A\in{\rm GL}_2(\mathbb C)$
mapping to an element of $\mathcal A_G$ such that $\Lambda'\equiv\psi_A\psi_B\Lambda$.
\end{itemize}
\end{proposition}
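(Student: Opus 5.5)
(1) and (2) are formal, and I will dispatch them first. For (1), $\mathcal A_G$ is the normalizer of $G$ in ${\rm PGL}_2(\acfield)$. It contains the identity, it is closed under products because $(ab)G(ab)^{-1}=a(bGb^{-1})a^{-1}=aGa^{-1}=G$, and it is closed under inverses since $a^{-1}Ga=G$ follows from $aGa^{-1}=G$. Each $g\in G$ satisfies $gGg^{-1}=G$, so $G\subseteq\mathcal A_G$. For (2), $a(bGb^{-1})a^{-1}=bGb^{-1}$ is equivalent to $(b^{-1}ab)G(b^{-1}ab)^{-1}=G$, so $a\in\mathcal A_{bGb^{-1}}$ if and only if $b^{-1}ab\in\mathcal A_G$, i.e. $a\in b\mathcal A_Gb^{-1}$.

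For (3), let $a\in\mathcal A_G$ with representative $A\in{\rm GL}_2(\acfield)$, and let $\psi_A$ be the map of Remark \ref{rem:conjugation-projective}. It fixes $L_\infty,L_0,L_{I_2}$ and sends $L_M$ to $L_{AMA^{-1}}$, which depends only on $a$. Let $\mathcal L$ be a normalized configuration with $G_{\mathcal L}=G$. By Remark \ref{rem:conjugation-projective}, $\psi_A\mathcal L$ is again a normalized set of skew lines and its group is $aGa^{-1}=G$. Alternatively, one can see this from Definition \ref{def:GL-matrix}: conjugation by $A$ is linear, so it commutes with taking differences $M_i-M_j$, and it carries the generators of $G_{\mathcal L}$ to conjugates of those generators. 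Hence $\psi_A\mathcal L\subseteq\calm_G$. Since $\calm_G$ is the union of all such $\mathcal L$, this gives $\psi_A(\calm_G)\subseteq\calm_G$, and applying the same argument to $a^{-1}$ gives equality. The axioms of a group action follow from $\psi_{AB}=\psi_A\psi_B$.

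For (4), existence of $\Lambda$ comes from Remark \ref{3LinesTo3LinesRem}, since any three skew lines can be sent to $L_\infty,L_0,L_{I_2}$. The group $G_{\Lambda\mathcal L}$ is isomorphic to $G$: by Remark \ref{IndependentRem} the groupoid is coordinate independent, and the automorphism groups at different objects of the groupoid are isomorphic. Since $G$ is finite and $\acfield=\mathbb C$, \cite[Proposition~4.1]{Beauville2010} says that $G_{\Lambda\mathcal L}=b'Gb'^{-1}$ for some $b'\in{\rm PGL}_2(\mathbb C)$. Taking $B$ to represent $b'^{-1}$, Remark \ref{rem:conjugation-projective} shows that $\psi_B\Lambda\mathcal L$ has group $G$.

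For uniqueness, let $\Lambda'$ be as in the statement. The map $\Lambda'(\psi_B\Lambda)^{-1}$ fixes $L_\infty,L_0,L_{I_2}$, so by Remark \ref{rem:conjugation-projective} it equals $\psi_A$ for some $A$, and then $\Lambda'\equiv\psi_A\psi_B\Lambda$. Both $\psi_B\Lambda\mathcal L$ and $\Lambda'\mathcal L=\psi_A(\psi_B\Lambda\mathcal L)$ have group $G$, and Remark \ref{rem:conjugation-projective} gives that the group of the latter is $aGa^{-1}$. So $aGa^{-1}=G$, i.e. $a\in\mathcal A_G$.

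The main obstacle is in (4): checking that the group computed in the new coordinates is genuinely conjugate to $G$ in ${\rm PGL}_2(\mathbb C)$, and not merely abstractly isomorphic to it. I handle this through the canonical identification of $G_{\mathcal L}$ as a subgroup of ${\rm Aut}(L)$, which is coordinate independent by Remark \ref{IndependentRem}. If only an abstract isomorphism were available, I would instead invoke the classification: finite subgroups of ${\rm PGL}_2(\mathbb C)$ of the same isomorphism type are conjugate, which is the cited result of Beauville.
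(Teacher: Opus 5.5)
Your proof is correct and follows the paper's argument: (3) uses Remark~\ref{rem:conjugation-projective}, and (4) uses Remark~\ref{3LinesTo3LinesRem}, the coordinate independence from Remark~\ref{IndependentRem}, conjugacy of isomorphic finite subgroups of ${\rm PGL}_2(\mathbb C)$, and the observation that $\Lambda'(\psi_B\Lambda)^{-1}$ fixes $L_\infty,L_0,L_{I_2}$, hence equals some $\psi_A$ with $aGa^{-1}=G$. Beyond the paper, you also write out the routine items (1) and (2) and check, using $a^{-1}$, that $\psi_A$ actually permutes $\calm_G$; the paper leaves these implicit.
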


\begin{proof}
Items (1,2) are routine and left to the reader.

(3) Let $M\in \calm_G$. Then there is a finite set $\mathcal L$ of skew lines including 
$L_\infty, L_0, L_{I_2}$ and $L_M$ whose group is $G$. By Remark \ref{rem:conjugation-projective}, 
for each element $\alpha$ of $\mathcal A_G$
there is a finite set of skew lines $\mathcal L'$ including $L_\infty, L_0, L_{I_2}$ and $L_{\alpha M\alpha^{-1}}$ 
whose group is $\alpha G\alpha^{-1}=G$. 
Thus $\alpha M\alpha^{-1}\in \calm_G$.

(4) By Remark \ref{3LinesTo3LinesRem}, there is a $\Lambda$ with 
$$\Lambda(L_a)= L_\infty, \Lambda(L_b)= L_0, \Lambda(L_c)=L_{I_2},$$
and $\Lambda\mathcal L=\{L_\infty,L_0, L_{I_2},L_{M_4'},\ldots,L_{M_s'}\}$
for some matrices $M_i'\in{\rm GL}_2(\acfield)$.
The images of $I_2,M_4',\ldots,M_s'$ in ${\rm PGL}_2(\acfield)$ and their differences 
generate a group $G'$; by Remark \ref{IndependentRem}, $G'$ is isomorphic to $G$. 
But over $\mathbb C$, all instances of $G$ in ${\rm PGL}_2(\mathbb C)$
are conjugate, so there is a $b\in {\rm PGL}_2(\mathbb C)$ such that
$bG'b^{-1}=G$. Taking a $B\in{\rm GL}_2(\mathbb C)$
that maps to $b$, we get $\psi_B\Lambda$ as desired.
Finally, $a=\Lambda'\Lambda^{-1}\psi_B^{-1}$ fixes the lines $L_\infty, L_0, L_{I_2}$ (set-wise, but not 
necessarily point-wise) so $a=\psi_A$ for some matrix $A\in {\rm GL}_2(\mathbb C)$.
Thus the group associated to $a\mathcal L$ is $AGA^{-1}$. but both $\psi_B\Lambda$ and $\Lambda'$
preserve $G$ as a subset of ${\rm PGL}_2(\acfield)$, so $AGA^{-1}=G$, hence $A$ maps to an element of 
$\mathcal A_G$ in ${\rm PGL}_2(\acfield)$.
\end{proof}

\begin{example}\label{A4-S4Examples} As examples,
we show for $G\cong S_4$ and $\acfield=\mathbb C$ that we have $\mathcal A_G=G$, and for $G\cong A_4$ and $\acfield=\mathbb C$, that there is a unique $H\subset {\rm PGL}_2(\acfield)$ 
with $H\cong S_4$ containing $G$ and that we have $\mathcal A_G=H$.

First consider $G\cong S_4$. By \cite{Faber}, all instances of $S_4$ in ${\rm PGL}_2(\acfield)$ are conjugate, so by 
Proposition \ref{A_GProp}(2), it is enough to confirm
$\mathcal A_G=G$ for the instance given in Example \ref{S4Example}. If $\alpha\in \mathcal A_G$, then $\alpha$
induces a permutation on the elements of $G$, so we have a homomorphism $\mathcal A_G\to {\rm Perms}(G)$.
We now show this is injective and hence $\mathcal A_G$ is finite. 
Suppose $b$ is in the kernel and let $M\in {\rm GL}_2(\acfield)$ represent an element of $G$.
Then $bMb^{-1}=cM$ for some scalar $c$. Since conjugation preserves determinants, we have $c^2=1$ so
$bMb^{-1}=\pm M$. The only invertible matrices $B$ with $BU_2B^{-1}=\pm U_2$ and $BU_5B^{-1}=\pm U_5$
are scalar matrices, hence $b$ is trivial.

Moreover, $\mathcal A_G$ is a nonabelian subgroup of ${\rm PGL}_2(\acfield)$
(since by Proposition~\ref{A_GProp}(1) it contains $G\cong S_4$)
and by \cite{Faber}, over $\mathbb C$, a finite nonabelian subgroup of ${\rm PGL}_2(\acfield)$ must be either
a dihedral group, $A_4$, $S_4$ or $A_5$. But neither $A_4$ nor $A_5$ contain a subgroup isomorphic to $S_4$,
and a dihedral group has an abelian subgroup of index 2, so every subgroup of a dihedral group 
is either abelian or has an abelian subgroup of index 2. Thus $G$ cannot be a subgroup of a dihedral group.
Thus $\mathcal A_G=G$.

Now consider $G\cong A_4$. We can, mimicking what we did before, assume $G$ is the $A_4$ given in Example \ref{S4Example}. 
Let $H$ be the $S_4$ given in the example, so $G$ is a normal subgroup of $H$.
Thus $H\subseteq \mathcal A_G$. Now the argument given before shows that $\mathcal A_G=H$.
If $G$ is contained in two $S_4$'s, say $H$ and $bHb^{-1}$, then 
$bGb^{-1}=G$ so $b\in \mathcal A_G=H$ so $H=bHb^{-1}$, hence the $S_4$ containing $G$ is unique.
\end{example}

\section{The classification of skew lines with group $A_4$}
It is convenient to work with a particular representation of $A_4$ in
${\rm PGL}_2(\acfield)$. Since all such subgroups are conjugate, this choice
does not affect the classification up to projective equivalence. More
precisely, we choose the twelve matrices $U_i\in{\rm GL}_2(\acfield)$ given in 
Example \ref{S4Example}.
Their 
images in ${\rm PGL}_2(\acfield)$ form a subgroup isomorphic to $A_4$.
\medskip
\paragraph{\bf Notation.} We use $U$ for both the matrix in ${\rm GL_2(K)}$ and its 
image $[U]$ in ${\rm PGL_2(K)}$. So 
we suppress the notation for the image of a matrix in
${\rm PGL}_2(\acfield)$.
In particular, we will use $U_i-U_j$ for the image $[U_i-U_j]$ of the matrix $U_i-U_j$, and when a matrix $U$ is said to belong to $A_4$, or
to have a given order, this refers to its image in
${\rm PGL}_2(\acfield)$. 

Lemma \ref{lem:A4-possible-lines} finds a finite set $\calm_{A_4}'$ which contains $\calm_{A_4}$.
In fact, it turns out that $\calm_{A_4}'=\calm_{A_4}$, but to confirm this we will need to show that
every element of $\calm_{A_4}'$ occurs in some set $\mathcal L$ containing $L_\infty, L_0$ and $L_{I_2}$
with $G_{\mathcal L}=A_4$. We do this in Theorem \ref{thm:A4-classification}.

\begin{lemma}\label{lem:A4-possible-lines}
Assume that $\operatorname{char}(K)=0$, and fix the subgroup
$G\subset {\rm PGL}_2(\acfield)$ isomorphic to $A_4$ represented by the
matrices 
$
U_1,U_2,\ldots,U_{12}
$ 
mentioned above.

Let
$$
{\mathcal L}=\{L_\infty,L_0,L_{M_1},\ldots,L_{M_r}\},\qquad M_1=I_2,
$$
be a set of skew lines such that $G_{\mathcal L}=G$. Then every matrix $M_i$ with
$i\geq 2$ is equal to one of the matrices 
$
U_5,\ldots,U_{12}.
$ 
Consequently,
$$
{\mathcal L}\subseteq
\calm_{A_4}
\subseteq
\calm_{A_4}'=\{L_\infty,L_0,L_{I_2},L_{U_5},\ldots,L_{U_{12}}\},
$$
and hence 
$
|\calm_{A_4}'|=11.
$
\end{lemma}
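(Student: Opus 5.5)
The plan is to make the argument in the proof of Theorem \ref{mainThm} explicit for this particular $G$. Write each $M_i$ ($i\ge 2$) as $M_i=cU$ with $U\in\{U_1,\dots,U_{12}\}$ and $c\in\acfield^*$. The only constraints I intend to use are that $[M_i]\in G$ and $[M_i-I_2]\in G$, plus, for scalar $M_i$, that $[M_i-M_j]\in G$ for one non-scalar $M_j$.

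The first step is a convenient membership test for ${\rm PGL}_2$. In characteristic $0$ the nontrivial elements of $A_4$ have order $2$ or $3$. A non-scalar $N\in{\rm GL}_2(\acfield)$ has order $2$ in ${\rm PGL}_2$ iff $\operatorname{tr}N=0$, and order $3$ iff $(\operatorname{tr}N)^2=\det N$. So $[N]\in G$ forces $(\operatorname{tr}N)^2/\det N\in\{0,1\}$ when $N$ is non-scalar. For $N=cU-I_2$ we have
\[
\operatorname{tr}N=c\operatorname{tr}U-2,\qquad \det N=c^2\det U-c\operatorname{tr}U+1 .
\]
This turns membership into polynomial conditions of degree at most $2$ in $c$.

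\textbf{Case 1: $U\in\{U_2,U_3,U_4\}$, the involutions.} Here $\operatorname{tr}U=0$, so $\operatorname{tr}N=-2\neq0$. Hence $[N]$ would have to have order $3$, which forces $c^2\det U=3$. This leaves two values of $c$ for each $U$. For each of these six candidates I will check that $cU-I_2$ is not proportional to any of the eight order-$3$ matrices $U_5,\dots,U_{12}$. The check compares the ratio of off-diagonal entries and uses $tt^*=t+t^*=1$. I expect every candidate to fail, which excludes $U_2,U_3,U_4$.

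\textbf{Case 2: $U\in\{U_5,\dots,U_{12}\}$, order $3$.} These satisfy $(\operatorname{tr}U)^2=\det U$; for instance $U_5$ has trace $t+t^*=1$ and determinant $tt^*=1$. Note that $\operatorname{tr}N=0$ is impossible here. Indeed, $c=2/\operatorname{tr}U$ would make $N$ a trace-zero matrix, and a direct check shows it is not proportional to $U_2,U_3,U_4$. So $[N]$ must have order $3$, i.e. $(c\operatorname{tr}U-2)^2=c^2\det U-c\operatorname{tr}U+1$. Since $(\operatorname{tr}U)^2=\det U$, the quadratic terms cancel, leaving the linear equation $-3c\operatorname{tr}U+3=0$. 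Thus $c=1/\operatorname{tr}U$, and with the given normalisations ($\operatorname{tr}U_k=1$ for $k=5,\dots,12$) this gives $c=1$. For that value I will verify that $U_k-I_2$ really is proportional to an element of $G$; for example $U_5-I_2\equiv U_{11}$-type. This step only confirms consistency, since the lemma needs only the necessary direction.

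\textbf{Case 3: scalar $M_i=cI_2$ with $c\ne1$.} Since $G$ is nontrivial, some $M_j$ is non-scalar, so by Cases 1–2 we have $M_j=U_k$ for some $k\in\{5,\dots,12\}$. Then $N=U_k-cI_2$ must lie in $G$. Its trace is $1-2c$ and its determinant is $1-c+c^2$. Trace zero gives $c=1/2$, and I will check that $U_k-\tfrac12I_2$ is not an involution of $G$. Otherwise $(1-2c)^2=1-c+c^2$, i.e. $3c^2-3c=0$, so $c\in\{0,1\}$. Both are excluded.

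Together the three cases give $M_i\in\{U_5,\dots,U_{12}\}$ for $i\ge2$, and the count $|\calm_{A_4}'|=3+8=11$ follows. I expect the main obstacle to be the finite verifications in Cases 1 and 3, namely showing that specific candidate matrices are not proportional to any listed $U_j$. These checks are routine but must be done against the explicit matrices, using the relation $t^2=t-1$.
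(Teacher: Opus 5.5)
Your proof is correct and has the same three-case structure as the paper's (involutions, elements of order $3$, scalars), but it eliminates the unwanted branches differently.

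The paper notes that $cU-I_2$ is a polynomial in $U$, so its class lies in the centralizer of $[U]$ in $A_4$. That centralizer is the Klein four group when $[U]$ is an involution, and $\langle [U]\rangle$ when $[U]$ has order $3$. So the order of $[cU-I_2]$ is known in advance, and one eigenvalue equation settles each case: it forces $c=0$, resp.\ $c=1$, the latter through the same linear equation you obtain.

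You instead use the Cayley--Hamilton criteria ($\operatorname{tr}N=0$ for order $2$, $(\operatorname{tr}N)^2=\det N$ for order $3$). These also produce the extra candidates $c=\pm\sqrt3$ in Case 1, $c=2$ in Case 2 and $c=\tfrac12$ in Case 3. Those candidates really do give elements of order $3$ or $2$ in ${\rm PGL}_2(\acfield)$, so some information specific to $G$ is needed to discard them, and you take it from the explicit representatives.

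Those checks are immediate. The ratio of off-diagonal entries of $cU-\mu I_2$ equals that of $U$, independently of $c$ and $\mu$. It is $\pm1$ for $U_2,U_3,U_4$, but lies in $\{0,\infty,t,t^*\}$ for $U_5,\dots,U_{12}$ (using $t^2=t-1$). Alternatively, the paper's observation that $cU-\mu I_2$ commutes with $U$, combined with the fact that no involution of $A_4$ commutes with a $3$-cycle, makes these checks unnecessary.

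In short, the paper's route is more conceptual and independent of the chosen representatives. Yours uses only element orders in ${\rm PGL}_2$, reducing everything to explicit low-degree equations in $c$ plus a finite, representative-dependent verification.
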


\begin{proof}
Let $M=M_i$ with $i\geq 2$. Since $G_{\mathcal L}=G$, the image of $M$ in
${\rm PGL}_2(\acfield)$ belongs to $G$. If $M$ is not scalar, then
$$
M=cU
$$
for some nonzero scalar $c$ and some $U=U_j$ with $j\in\{2,\ldots,12\}$.
We first treat the non-scalar case assuming that $U$ has order $2$. Then $U$ has eigenvalues
$i$ and $-i$. Since $M-I_2=cU-I_2$ is a polynomial in $U$, its image commutes
with $U$ in $A_4$. The centralizer in $A_4$ of an element of order $2$ is the
Klein four subgroup, so the image of $M-I_2$ is either the identity or has
order $2$.

The image of $M-I_2$ cannot be the identity, since then $M-I_2$ would be
scalar and hence $M$ itself would be scalar, contrary to the assumption that
$U$ has order $2$. Therefore $M-I_2$ must have order $2$. The eigenvalues of
$cU-I_2$ are $ci-1$ and $-ci-1$, so this forces
$$
(ci-1)^2=(-ci-1)^2.
$$
This gives $c=0$, a contradiction.
Hence no scalar multiple of a representative
of an element of order $2$ in $G$ can occur.

Now suppose that $U$ has order $3$. 
Then $U$ has eigenvalues
$t$ and $t^*$, where
$$
t^2-t+1=0,\qquad t+t^*=1,\qquad tt^*=1.
$$
Again $M-I_2=cU-I_2$ is a polynomial in $U$, so its image commutes with $U$.
The centralizer in $A_4$ of an element of order $3$ is the cyclic subgroup
generated by that element. Since the image of $M-I_2$ cannot be the identity,
it must also have order $3$. Therefore
$$
(ct-1)^3=(ct^*-1)^3.
$$
Using $t+t^*=1$ and $tt^*=1$, this simplifies to 
$
c-1=0.
$
Hence $c=1$.

Conversely, for such a representative $U$ we indeed have $U-I_2$ projecting
to $G$. For our chosen representatives of the order-$3$ elements, the eigenvalues
are $t$ and $t^*$, so the trace is $1$ and the determinant is $1$. 
Hence
$$
U^2-U+I_2=0,
\qquad
\text{and therefore}
\qquad
I_2=U(I_2-U).
$$
Thus $-U+I_2$ represents the inverse of $U$ in ${\rm PGL}_2(\acfield)$.

It remains to consider the case where $M$ is scalar. Write $M=cI_2$. 
Since $G_{\mathcal L}=G\simeq A_4$, some non-scalar matrix must occur among the matrices
defining the lines. By the non-scalar case just proved, such a matrix must be
one of $U_5,\ldots,U_{12}$, hence represents an element of order $3$. Since $U-cI_2$ must
project to an element of $G$, equivalently
$$
c^{-1}U-I_2
$$
must project to an element of $G$. By the preceding paragraph this forces
$c^{-1}=1$, hence $c=1$. Thus the only scalar matrix which can occur is
$I_2$, giving the already fixed line $L_{I_2}$.

The description of $\calm_{A_4}'$ and the equality $|\calm_{A_4}'|=11$ follow
immediately, since $A_4$ has eight elements of order $3$.
\end{proof}

In the next lemma we describe in more detail which subsets of $\calm_{A_4}'$ can give rise to the group~$A_4$.

\begin{lemma}\label{lem:A4-admissible-pairs}
Let $U,V\in\{U_5,\ldots,U_{12}\}$. Thus $U$ and $V$ have order $3$ in
$A_4$, in the sense of our notational convention.

If the two lines $L_U$ and $L_V$  occur simultaneously in a set of skew lines
with associated group contained in the fixed subgroup $A_4$ then $UV$ has
order $2$.

Conversely, if $UV$ has order $2$, then $U-V$ is
invertible and projects to an element of $A_4$. Hence
$$
\{L_\infty,L_0,L_{I_2},L_U,L_V\}
$$
is a set of five skew lines with associated group $A_4$.

There are exactly twelve unordered pairs $\{U,V\}$ with this property.
The union of these 12 pairs is
$\{U_5,\ldots,U_{12}\}$. Moreover, there is no triple $U,V,W\in\{U_5,\ldots,U_{12}\}$ for which all three
products
$$
UV,\quad
UW,\quad
VW
$$
have order $2$. 
\end{lemma}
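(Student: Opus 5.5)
The plan is to reduce everything to trace and determinant identities for our normalized order-$3$ representatives, and then to a coset count inside $A_4$. As in the proof of Lemma \ref{lem:A4-possible-lines}, every $U\in\{U_5,\ldots,U_{12}\}$ satisfies $\operatorname{tr}U=\det U=1$. Hence $U^2-U+I_2=0$ and $U^{-1}=I_2-U$ as actual matrices.

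First, for $2\times 2$ matrices I will use the identity
\[
\det(U-V)=\det U+\det V-\operatorname{tr}U\operatorname{tr}V+\operatorname{tr}(UV)=1+\operatorname{tr}(UV).
\]
I also note that $\operatorname{tr}(U-V)=0$. So whenever $U\ne V$ and $U-V$ is invertible, $(U-V)^2$ is scalar (Cayley–Hamilton), and the image of $U-V$ is an element of order exactly $2$ in ${\rm PGL}_2(\acfield)$.

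Second, I will sort the order-$3$ elements by cosets of the Klein group $V_4\subset A_4$. The eight elements of order $3$ split into the two nontrivial cosets $C$ and $C^2=C^{-1}$ of $V_4$.
\begin{itemize}
\item If $U,V$ lie in the same coset, then $UV\in C^2$ has order $3$.
\item If they lie in opposite cosets, then $UV\in V_4$, so $UV$ has order $2$ unless $V=U^{-1}$.
\end{itemize}
Consequently each $U$ has exactly three partners $V$ with $UV$ of order $2$, namely $V=U^{-1}x$ for $x\in V_4\setminus\{1\}$. This gives $8\cdot 3/2=12$ unordered pairs, and their union is all eight elements. All partners of $U$ lie in the coset opposite to $U$. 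Since there are only two cosets, a triple $U,V,W$ with all three pairwise products of order $2$ would need three elements pairwise in different cosets among two cosets, which is impossible by pigeonhole.

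For the converse direction, assume $UV$ has order $2$. An order-$2$ element of ${\rm PGL}_2$ is represented by a trace-zero matrix, and since $\det U=\det V=1$ we get $\operatorname{tr}(UV)=0$ exactly. Then $\det(U-V)=1\ne 0$, so $L_U$ and $L_V$ are skew. It remains to see that the image of $U-V$ lies in $A_4$. Using $U=I_2-U^{-1}$, I will write $U-V=V^{-1}-U^{-1}$ and try to exhibit it as a scalar multiple of a Klein element. Failing a slick identity, I will verify it directly on the $12$ pairs. The work can be cut down using the conjugation action of $\mathcal A_{A_4}=S_4$ from Example \ref{A4-S4Examples} together with Proposition \ref{A_GProp}(3), which preserves $\{U_5,\ldots,U_{12}\}$ and the pair structure. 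With $U-V$, $U-I_2$ and $V-I_2$ all in $A_4$ (the latter two by Lemma \ref{lem:A4-possible-lines}), the group of the five lines is contained in $A_4$. It contains $U$ and $V$ from opposite cosets, and these generate $A_4$, since a proper subgroup containing two order-$3$ elements from different cyclic subgroups is impossible in $A_4$.

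The necessity direction is where I expect the main obstacle. Suppose $L_U,L_V$ lie in a configuration whose group is contained in $A_4$, so the image of $U-V$ lies in $A_4$. By the first step that image has order $2$, so it lies in $V_4$ and commutes appropriately. I must rule out $V$ in the same coset as $U$, and also $V=U^{-1}$. For $V=U^{-1}=I_2-U$ we get $U-V=2U-I_2$, a polynomial in $U$. Its image therefore lies in the centralizer $\langle U\rangle$, which has no element of order $2$; this is a contradiction. For the same-coset case, I will either find an identity showing that the image of $U-V$ is not in $V_4$, for instance via a conjugation/commutation argument, or check the $12$ same-coset pairs directly, again reduced by the $S_4$ symmetry to very few representatives.
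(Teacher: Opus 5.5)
Your proposal is correct. At its computational core it matches the paper: the paper also establishes ``$[U_i-U_j]\in A_4$ iff $U_iU_j$ has order $2$'' only by a direct calculation with the representatives.

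Elsewhere your route is different and more structural. The paper counts with explicit $3$-cycles on $a,b,c,d$: the partners of $(abc)$ are $(abd),(adc),(bcd)$, and any two of these multiply to a $3$-cycle. You instead exploit the normalization $\operatorname{tr}U=\det U=1$:
\begin{itemize}
\item the identity $\det(U-V)=1+\operatorname{tr}(UV)$ gives skewness without computation;
\item $\operatorname{tr}(U-V)=0$ shows $[U-V]$ has order $2$ whenever $U-V$ is invertible, so only membership in $V_4$ is at issue;
\item $V=U^{-1}$ is excluded by the centralizer argument;
\item the count and the absence of triangles follow from the two $V_4$-cosets.
\end{itemize}
In other words, the graph in Figure~\ref{A4CompGraph} is bipartite with parts $\{U_5,U_7,U_9,U_{11}\}$ and $\{U_6,U_8,U_{10},U_{12}\}$; it is $K_{4,4}$ minus the four inverse pairs. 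This explains the figure more cleanly than the paper does.

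The two cases you defer to a check can be settled by identities in your own framework, which makes your proof computation-free.

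\emph{Converse.} If $\operatorname{tr}(UV)=0$, Cayley--Hamilton gives $(UV)^2=-I_2$, so $UVU=-V^{-1}=V-I_2$. Hence
\[
UV^{-1}U=U(I_2-V)U=U^2-UVU=(U-I_2)-(V-I_2)=U-V,
\]
and therefore $[U-V]=[U][V]^{-1}[U]\in A_4$.

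\emph{Same coset.} If $U\neq V$ lie in the same coset, then $W=U^{-1}V$ has determinant $1$ and represents an element of $V_4\setminus\{1\}$. So $\operatorname{tr}W=0$ and $W^2=-I_2$. Then $U-V=U(I_2-W)$, and $I_2-W$ has eigenvalues $1\pm\sqrt{-1}$, so $[I_2-W]$ has order $4$. Since $A_4$ has no element of order $4$, $[U-V]\notin A_4$.

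If you keep the symmetry reduction, justify it directly rather than via Proposition~\ref{A_GProp}(3). That proposition concerns $\calm_{A_4}$, which at this stage is only known to lie in $\calm_{A_4}'$. The direct argument: conjugation preserves trace and determinant, and each order-$3$ element has a unique representative with trace $1$ and determinant $1$. Hence $\mathcal A_{A_4}$ permutes the matrices $U_5,\ldots,U_{12}$ themselves.
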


\begin{proof}
The condition that $L_U$ and $L_V$ occur simultaneously implies that
$U-V$ is invertible and that its image in ${\rm PGL}_2(\acfield)$ belongs to $A_4$. 
A direct calculation with the representatives $U_5,\ldots,U_{12}$ shows
that, for $i<j$, the matrix $U_i-U_j$ is invertible and projects to an
element of $A_4$ if and only if $U_iU_j$ has order $2$ in $A_4$.
Equivalently, this can be read off from the multiplication table of the
subgroup of ${\rm SL}_2(\acfield)$ generated by the displayed matrices
$U_1,\ldots,U_{12}$ and their negatives. This subgroup maps two-to-one onto
our fixed copy of $A_4\subset{\rm PGL}_2(\acfield)$; it is the binary
tetrahedral group.

We enclose also the compatibility graph. Two vertices (matrices $U_i,U_j$) are 
joined by an edge if their difference $U_i-U_j$ maps to a group element.
Note that every $U_i$ with $5\leq i\leq 12$ occurs as an endpoint of an edge.

\begin{figure}[ht]
\centering
\begin{tikzpicture}[
    scale=1.4,
    every node/.style={font=\Large},
    vertex/.style={circle, fill=black, inner sep=2.4pt}
  ]

\node[vertex,label=above left:$U_5$]    (U5)  at (0,2) {};
\node[vertex,label=right:$U_8$]   (U8)  at (2,2) {};
\node[vertex,label=below right:$U_{11}$](U11) at (2,0) {};
\node[vertex,label=below left:$U_{10}$] (U10) at (0,0) {};

\node[vertex,label=above left:$U_{12}$] (U12) at (1,3) {};
\node[vertex,label=above right:$U_9$]   (U9)  at (3,3) {};
\node[vertex,label=below right:$U_6$]   (U6)  at (3,1) {};
\node[vertex,label=left:$U_7$]    (U7)  at (1,1) {};

\draw[line width=.015in] (U5) -- (U8);
\draw[line width=.015in] (U8) -- (U11);
\draw[line width=.015in] (U11) -- (U10);
\draw[line width=.015in] (U10) -- (U5);

\draw[line width=.015in] (U12) -- (U9);
\draw[line width=.015in] (U9) -- (U6);
\draw[line width=.015in] (U6) -- (U7);
\draw[line width=.015in] (U7) -- (U12);

\draw[line width=.015in] (U5)  -- (U12);
\draw[line width=.015in] (U8)  -- (U9);
\draw[line width=.015in] (U11) -- (U6);
\draw[line width=.015in] (U10) -- (U7);

\end{tikzpicture}

\caption{\label{A4CompGraph}
Compatibility graph for the $A_4$ case. The vertices are
$U_5,\dots,U_{12}$. Two vertices $U_i$ and $U_j$ are joined by an edge
if and only if $[U_i-U_j]\in A_4$.}
\end{figure}

It remains only to count the relevant pairs inside $A_4$. To this end, we use the standard realization of $A_4$ as the group of even permutations of four
letters $a,b,c,d$. 
The elements of order $3$ are the eight $3$-cycles, while the elements of
order $2$ are the three double transpositions, i.e., permutations of the form
$(ab)(cd)$.
If $U$ corresponds to 
$
(abc),
$
then the three elements $V$ of order $3$ for which $UV$ has order $2$ are
precisely the $3$-cycles involving the fourth letter $d$ and two of
$a,b,c$. For example, with the convention that permutations are composed from right to left,
$$
(abc)(abd)=(ac)(bd).
$$
Thus each of the eight $3$-cycles is paired with exactly three others.
Hence the number of unordered admissible pairs is
$
\dfrac{8\cdot 3}{2}=12.
$

It remains to see that no three elements can be pairwise admissible.
By symmetry it is enough to start with the cycle $(abc)$. The three
$3$-cycles whose product with $(abc)$ is a double transposition are
$$
(abd),\qquad (adc),\qquad (bcd).
$$
Indeed,
$$
(abc)(abd)=(ac)(bd),\quad
(abc)(adc)=(ad)(bc),\quad
(abc)(bcd)=(ab)(cd).
$$
But the product of any two of these three cycles is again a $3$-cycle, not
a double transposition. Hence no admissible triple exists.
\end{proof}
We summarize our considerations in the following theorem, which additionally shows that up to projective equivalence there is a unique set $L$ of skew lines such that $G_{\mathcal L}\simeq A_4$.

\begin{theorem}\label{thm:A4-classification}
Assume that $\operatorname{char}(K)=0$. Up to projective equivalence, there
is a unique set $\mathcal L$ of skew lines in $\PP^3_\field$ such that
$$
G_{\mathcal L}\simeq A_4.
$$
This set consists of five lines. More precisely, after a projective change
of coordinates it is of the form
$$
{\mathcal L}=\{L_\infty,L_0,L_{I_2},L_U,L_V\},
$$
where $U,V\in\{U_5,\ldots,U_{12}\}$ and $UV$ has order $2$ in $A_4$. It follows that $\calm_{A_4}=\calm_{A_4}'$.
\end{theorem}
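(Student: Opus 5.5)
Given $\mathcal L$ with $G_{\mathcal L}\simeq A_4$, I first normalize. Pick three lines of $\mathcal L$ and apply Remark \ref{3LinesTo3LinesRem} to move them to $L_\infty,L_0,L_{I_2}$. Then, by Proposition \ref{A_GProp}(4) (valid over $\acfield$ in characteristic $0$, since finite subgroups of the same type are conjugate by \cite[Proposition~4.1]{Beauville2010}), I compose with some $\psi_B$ so that the resulting configuration has group exactly the fixed copy $G=\{U_1,\dots,U_{12}\}$ of $A_4$. Lemma \ref{lem:A4-possible-lines} then gives $\mathcal L\subseteq\calm_{A_4}'$. Hence $\mathcal L=\{L_\infty,L_0,L_{I_2}\}\cup\{L_U: U\in S\}$ for some $S\subseteq\{U_5,\dots,U_{12}\}$.

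Next I bound $S$. Any two $U,V\in S$ give lines $L_U,L_V$ lying together in a set whose group is contained in $A_4$, so Lemma \ref{lem:A4-admissible-pairs} says $UV$ has order $2$. The same lemma says no three elements are pairwise admissible, so $|S|\le 2$. $S$ cannot be empty, since three lines have trivial group. If $S=\{U\}$, the group is generated by $I_2$, $U$ and $U-I_2$. The proof of Lemma \ref{lem:A4-possible-lines} shows $U-I_2\equiv U^{-1}$, so this group is cyclic of order $3$, not $A_4$. Therefore $|S|=2$, say $S=\{U,V\}$ with $UV$ of order $2$. Conversely, Lemma \ref{lem:A4-admissible-pairs} gives that each such set has group $A_4$. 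Since each $U_i$, $5\le i\le 12$, lies on some edge of Figure \ref{A4CompGraph}, every element of $\calm_{A_4}'$ occurs, so $\calm_{A_4}=\calm_{A_4}'$. In particular, every set with group $A_4$ has exactly five lines.

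It remains to prove uniqueness up to projective equivalence, which I expect to be the main obstacle: I must show that all twelve configurations $\{L_\infty,L_0,L_{I_2},L_U,L_V\}$ are projectively equivalent. By Example \ref{A4-S4Examples}, $\mathcal A_G=H\cong S_4$. By Remark \ref{rem:conjugation-projective} and Proposition \ref{A_GProp}(3), $H$ acts on these configurations through $\psi_A$. This action is conjugation on $\{U_5,\dots,U_{12}\}$, and conjugation preserves both the representatives $U_i$ (the sets $\{U_5,\dots,U_{12}\}$ are exactly the order-$3$ representatives with trace $1$ and determinant $1$, which conjugation preserves) and the edge relation of Figure \ref{A4CompGraph}.

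So I must show that $S_4$ acts transitively on the twelve edges. In permutation language, an edge is a pair of $3$-cycles $\{\sigma,\tau\}$ with $\sigma\tau$ a double transposition. Conjugation by $S_4$ is transitive on the eight $3$-cycles. The stabilizer of $(abc)$ in $S_4$ is $S_3$ on $\{a,b,c\}$, and it permutes its three neighbours $(abd),(adc),(bcd)$ transitively. Hence $S_4$ acts transitively on the ordered edges, and so on the unordered ones.

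One subtlety to check is that the correspondence between the matrices $U_i$ and permutations is compatible with conjugation inside $H$. The other is that the edge relation defined through $[U_i-U_j]\in A_4$ coincides with the order-$2$ product condition; this is exactly the content of Lemma \ref{lem:A4-admissible-pairs}.

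As a fallback, I would exhibit explicit conjugating matrices from $H$, for instance $U_{13}$ together with $U_5$ and $U_2$, and verify directly that they move one edge, such as $\{U_5,U_8\}$, to all the others.
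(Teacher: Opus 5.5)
Your proposal is correct and follows the paper's proof essentially step for step. The one real difference is that you get transitivity on the twelve admissible pairs from $\mathcal A_G\cong S_4$, whereas the paper uses $A_4$ itself. $A_4$ already suffices: each admissible pair consists of one $3$-cycle from each $A_4$-conjugacy class and is determined by the ordered pair of their (distinct) fixed letters, and $A_4$ is $2$-transitive on four letters.

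One slip needs fixing. The stabilizer of $(abc)$ under conjugation in $S_4$ is its centralizer $\langle(abc)\rangle$, not the $S_3$ on $\{a,b,c\}$; for example, $(ab)$ conjugates $(abc)$ to $(acb)$. Your conclusion still holds, because $\langle(abc)\rangle$ permutes $(abd),(bcd),(adc)$ cyclically.
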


\begin{proof}
Let ${\mathcal L}$ be a set of skew lines with $G_{\mathcal L}\simeq A_4$. By conjugating the
corresponding subgroup of ${\rm PGL}_2(\acfield)$ and, using Remark~\ref{rem:conjugation-projective}, we
may assume that $G_{\mathcal L}$ is the fixed subgroup $G\simeq A_4$ represented by
$U_1,\ldots,U_{12}$. Since any three skew lines are projectively equivalent,
we may also assume that
$
L_\infty, L_0, L_{I_2}
$
belong to~${\mathcal L}$.

The set ${\mathcal L}$ cannot have only these three lines, since then $G_{\mathcal L}$ is trivial.
It also cannot have only one further line $L_M$, since then $G_{\mathcal L}$ is generated
by the images of $M$ and $M-I_2$, which commute; hence $G_{\mathcal L}$ would be abelian, see also \cite[Theorem 2.1.22]{POLITUS3}. Hence ${\mathcal L}$ has at least five lines.

By Lemma~\ref{lem:A4-possible-lines}, every additional line is of the form
$L_U$, where $U\in\{U_5,\ldots,U_{12}\}$ has order~$3$ in $A_4$. By
Lemma~\ref{lem:A4-admissible-pairs}, two such lines can occur together only
when the product of the corresponding order-$3$ elements has order $2$, and
no three additional such lines can occur simultaneously. Thus ${\mathcal L}$ has exactly
five lines and is of the form
$$
\{L_\infty,L_0,L_{I_2},L_U,L_V\},
$$
with $U,V$ of order $3$ and $UV$ of order $2$ in $A_4$.

Conversely, Lemma~\ref{lem:A4-admissible-pairs} shows that every such
five-line set has associated group $A_4$, which confirms that $\calm_{A_4}=\calm_{A_4}'$.

It remains only to note that all such five-line sets are projectively
equivalent. By Lemma~\ref{lem:A4-admissible-pairs}, the admissible pairs are
the twelve unordered pairs of $3$-cycles whose product has order $2$. The
conjugation action of $A_4$ on these pairs is transitive. Therefore, by 
Remark~\ref{rem:conjugation-projective}, conjugating one admissible pair to another is induced by a
projective automorphism preserving
$
L_\infty,\ L_0,\ L_{I_2}.
$ 
Hence all admissible five-line configurations are projectively equivalent.
\end{proof}

\begin{remark}
The theorem shows that a configuration with associated group $A_4$ consists
of five lines. This does not contradict the equality $|\calm_{A_4}|=11$.
The set $\calm_{A_4}$ is the union of all lines which can occur in a
configuration $\mathcal L$ containing
$L_\infty,\ L_0,\ L_{I_2}$
such that $G_{\mathcal L}=\{[U_1],\ldots,[U_{12}]\}\cong A_4$.  
Each particular such configuration $\mathcal L$ contains only two of the eight order-$3$ lines $L_{U_i}$, $5\leq i\leq 12$,
appearing in $\calm_{A_4}$.
\end{remark}

\begin{corollary}\label{Cor:AutForA4}
Let $\mathcal L$ be a finite set of skew lines in $\PP^3_{\mathbb C}$ 
with group $A_4$ (thus $\mathcal L$ consists of an appropriate set of 5 lines). Then the group ${\rm Aut}(\mathcal L)\subset{\rm Aut}(\PP^3_{\mathbb C})$
of projective transformations that permute the lines in $\mathcal L$ is isomorphic to $S_5$.
\end{corollary}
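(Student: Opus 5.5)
We would show that ${\rm Aut}(\mathcal L)$ acts faithfully on the five lines, so that it embeds in $S_5$, and then that every permutation of the lines is realized by some projectivity. By Theorem \ref{thm:A4-classification} we may take $\mathcal L=\{L_\infty,L_0,L_{I_2},L_U,L_V\}$ with $U,V$ of order $3$ and $UV$ of order $2$.

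\textbf{Injectivity.} Suppose a projectivity fixes each of the five lines. Since it fixes $L_\infty,L_0,L_{I_2}$, Remark \ref{rem:conjugation-projective} shows it equals $\psi_A$ for some $A\in{\rm GL}_2(\mathbb C)$. It also fixes $L_U$ and $L_V$, so $AUA^{-1}=U$ and $AVA^{-1}=V$ as matrices. The classes of $U$ and $V$ generate $A_4$, which is nonabelian and acts irreducibly on $\mathbb C^2$. By Schur's lemma $A$ is scalar, so $\psi_A$ is the identity.

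\textbf{Surjectivity.} It suffices to realize two generators of $S_5$, for instance all permutations of the three normalized lines together with one transposition moving a line into $\{L_\infty,L_0,L_{I_2}\}$.

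Permutations of $\{L_U,L_V\}$ fixing the other three lines come from $\psi_A$ with $A\in{\rm GL}_2$ and $AUA^{-1}=V$, $AVA^{-1}=U$ as matrices. Conjugation by an element of $\mathcal A_{A_4}=S_4$ (Example \ref{A4-S4Examples}) that swaps the classes $[U],[V]$ should do this. In $S_4$ a suitable transposition conjugates $(abc)\mapsto(abd)$ and $(abd)\mapsto(abc)$. Equality as matrices, and not merely modulo scalars, is automatic: conjugation preserves trace and determinant, and the normalized representatives $U_5,\dots,U_{12}$ are exactly those with trace $1$ and determinant $1$.

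For permutations moving $L_\infty,L_0,L_{I_2}$, we apply Proposition \ref{A_GProp}(4). For any ordered choice of three lines $L_a,L_b,L_c\in\mathcal L$ there is a $\Lambda$ sending them to $L_\infty,L_0,L_{I_2}$ such that $\psi_B\Lambda\mathcal L$ again has group $A_4$. By Lemma \ref{lem:A4-possible-lines} and Theorem \ref{thm:A4-classification}, $\psi_B\Lambda\mathcal L$ is then $\{L_\infty,L_0,L_{I_2},L_{U'},L_{V'}\}$ for an admissible pair $\{U',V'\}$. By the transitivity of the conjugation action on the 12 admissible pairs, a further $\psi_C$ maps this configuration back onto $\mathcal L$. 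The composite $\psi_C\psi_B\Lambda$ lies in ${\rm Aut}(\mathcal L)$ and sends $(L_a,L_b,L_c)$ to $(L_\infty,L_0,L_{I_2})$ in that order. So the image of ${\rm Aut}(\mathcal L)$ in $S_5$ acts transitively on ordered triples of distinct lines, which has order $60$. Combined with the stabilizer of the triple, which contains the swap of $L_U,L_V$ constructed above, the image has order at least $120$, hence equals $S_5$.

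\textbf{Main obstacle.} The hardest step is the matrix-level swap of $L_U$ and $L_V$ fixing the three normalized lines, since conjugation only controls classes in ${\rm PGL}_2$. The trace/determinant normalization should settle it, but this needs to be checked on an explicit pair such as $U_5,U_8$.
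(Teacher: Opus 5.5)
Your proof is correct, but it reaches the conclusion by a different route from the paper's.

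The paper fixes $\mathcal L=\{L_\infty,L_0,L_{I_2},L_{U_5},L_{U_8}\}$. It gets injectivity by solving $DU_5=U_5D$, $DU_8=U_8D$ by hand. It gets surjectivity by exhibiting four explicit $4\times 4$ matrices realizing the adjacent transpositions of the list $L_\infty,L_0,L_{I_2},L_{U_5},L_{U_8}$.

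You replace the injectivity computation by Schur's lemma. The irreducibility claim deserves one line of justification: $U$ has distinct eigenvalues $t\neq t^*$, so its commutant is $\{\alpha I_2+\beta U\}$, and $UV\neq VU$ forces $\beta=0$. For surjectivity you construct only the swap of $L_U,L_V$. You then derive transitivity on the $60$ ordered triples of lines from three ingredients: the uniqueness in Theorem~\ref{thm:A4-classification}, Proposition~\ref{A_GProp}(4), and the transitivity of $A_4$ on the $12$ admissible pairs. You finish by orbit--stabilizer. That count, not the ``two generators'' sentence at the start of your surjectivity paragraph, is what actually carries the argument.

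The step you flag as the main obstacle is already settled by your trace/determinant remark. Every one of $U_5,\dots,U_{12}$ has trace and determinant $1$, so $AUA^{-1}=\lambda V$ forces $\lambda=1$. Indeed, the paper's swap matrix $A$ is exactly $U_{15}$, a transposition in $\mathcal A_{A_4}$. Since $A_4$ acts simply transitively on the $12$ admissible pairs, no element of $A_4$ itself can swap $U$ and $V$. So passing to $\mathcal A_{A_4}\cong S_4$ is genuinely needed.

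What your approach buys is a conceptual explanation of why $S_5$ appears: the five-line configuration is projectively rigid with respect to every ordering of any three of its lines. The cost is reliance on the conjugacy of all copies of $A_4$ in ${\rm PGL}_2(\mathbb C)$ and on the coordinate independence of $G_{\mathcal L}$, both built into Proposition~\ref{A_GProp}(4). The paper's approach is more laborious to check by hand, but it is completely explicit and self-contained.
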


\begin{proof} As  noted in Remark \ref{IndependentRem}, up to isomorphism, the group ${\rm Aut}(\mathcal L)$ is 
independent of the choice of coordinates on $\PP^3_{\mathbb C}$,
so by Theorem \ref{thm:A4-classification} we may choose $\mathcal L=\{L_\infty, L_0, L_{I_2},L_{U_5},L_{U_8}\}$.

We have a homomorphism $\pi:{\rm Aut}(\mathcal L)\to S_5$, since each automorphism permutes the lines in $\mathcal L$.
To show $\pi$ is injective suppose $\Lambda\in {\rm GL}_4(\mathbb C)$ represents
an element of ${\rm PGL}_4(\mathbb C)$
that preserves all five lines. Since $\Lambda$ preserves $L_\infty, L_0$ and $L_{I_2}$, we can represent $\Lambda$ as a block matrix 
$\Lambda=\begin{pmatrix}
D & 0\\
0 & D\\
\end{pmatrix}$, where $D\in {\rm GL}_2(\mathbb C)$. Points of $L_{U_5}$
are of the form
$\begin{pmatrix}
x\\
y\\
U_5\begin{pmatrix}
x\\
y\\
\end{pmatrix}
\end{pmatrix}$, which under $\Lambda$ map to
$\begin{pmatrix}
D\begin{pmatrix}
x\\
y\\
\end{pmatrix}\\
DU_5\begin{pmatrix}
x\\
y\\
\end{pmatrix}\\
\end{pmatrix}$.
In order for $L_{U_5}$ to be mapped to itself (and likewise for $L_{U_8}$), we thus need $DU_5=U_5D$ and $DU_8=U_8D$. For 
$D=\begin{pmatrix}
a & b\\
c & d\\
\end{pmatrix}$, this means
$c=0$, $d=a+(-2t+1)b$ and $b=0$ so
up to scalars 
$D\equiv I_2$, hence $\pi$ is injective.
To show $\pi$ is surjective, it suffices to exhibit matrices $\Lambda$ that swap consecutive lines in the list 
$L_\infty, L_0, L_{I_2},L_{U_5},L_{U_8}$
and preserve the other three.
Define
$A=\begin{pmatrix}
2t-1 & 1\\
1 & 1-2t
\end{pmatrix}$,
$B=\begin{pmatrix}
1 & 1-2t\\
1-2t & -1
\end{pmatrix}$, and
$C=\begin{pmatrix}
1 & 1\\
1-2t & -1
\end{pmatrix}$.
Then: \\
$\begin{pmatrix}
0 & B\\
B & 0
\end{pmatrix}$
swaps $L_\infty$ with $L_0$
and maps each of the other lines to themselves;
$\begin{pmatrix}
B & 0\\
B & -B
\end{pmatrix}$
swaps $L_0$ with $L_{I_2}$
and maps each of the other lines to themselves;
$\begin{pmatrix}
C & 0\\
0 & U_5C
\end{pmatrix}$
swaps $L_{I_2}$ with $L_{U_5}$
and maps each of the other lines to themselves; and
$\begin{pmatrix}
A & 0\\
0 & A
\end{pmatrix}$
swaps $L_{U_5}$ with $L_{U_8}$
and maps each of the other lines to themselves.
Thus $\pi$ is surjective.
\end{proof}

\begin{remark}\label{rem:GroupoidForS_4}
In the situation of Corollary \ref{Cor:AutForA4}
there is a nice connection between ${\rm Aut}(\mathcal L)$ and the groupoid ${\mathcal G}_{\mathcal L}$ of $\mathcal L$ (see Remark \ref{IndependentRem} and \cite{POLITUS3}). The objects of ${\mathcal G}_{\mathcal L}$ are the lines $L\in\mathcal L$. For each choice of two lines $L,L'\in\mathcal L$, the Hom set ${\rm Hom}_{{\mathcal G}_{\mathcal L}}(L,L')$ is a set of isomorphisms $L\to L'$ defined by the geometry of the lines in $\mathcal L$.  
But in the context of 
five lines $\mathcal L=\{\ell_1,\ldots,\ell_5\}$ with
$G_{\mathcal L}\cong A_4$ we have, by direct calculation in the case of 
$\mathcal L=\{L_\infty,L_0,L_{I_2},L_{U_5},L_{U_8}\}$ using the generators for ${\rm Aut}(\mathcal L)$ as given in the proof of Corollary \ref{Cor:AutForA4} and the interpretation of the groupoid given in \cite{Favacchio},
that 
$${\rm Hom}_{{\mathcal G}_{\mathcal L}}(\ell_i,\ell_j)=$$
$$\{g|_{\ell_i}: g\in {\rm Aut}(\mathcal L)\text{ is an even permutation of the lines $\mathcal L$ with } g(\ell_i)=\ell_j\}.$$
Note that there are five subgroups of ${\rm Aut}(\mathcal L)\cong S_5$ isomorphic to $S_4$; they are the subgroups of transformations mapping $\ell_i$ to itself
for each of the five lines $\ell_i$.
Thus ${\rm Hom}_{{\mathcal G}_{\mathcal L}}(\ell_i,\ell_i)$ is the subgroup of even permutations $A_4\subset S_4\subset S_5\cong{\rm Aut}(\mathcal L)$ in the subgroup $S_4$ 
corresponding to $\ell_i$.
\end{remark}

\section{The classification for the group $S_4$}

The classification for $A_4$ showed that, after normalizing the
configuration to contain
$
L_\infty,\ L_0,\ L_{I_2},
$ 
every additional line comes from a set of only eight additional candidate lines, 
corresponding to the eight 3-cycles in $A_4$. Moreover, the compatibility
condition (showing which pairs $(U,V)$ of these eight could be taken together)
reduced to a simple condition in $A_4$ (namely that $UV$ must be an element of order 2).
In particular, the vertices of the compatibility graph shown in Figure~\ref{A4CompGraph}
are $U_5,\ldots,U_{12}$ and the maximal cliques are the edges.

For $S_4$ the same strategy applies, but several new phenomena appear. In
particular, for some values of $i$
the matrix $cU_i$ can be in
$\calm_{S_4}$ for more than one value of $c$ and the maximal cliques in the compatibility graph are not just edges. Thus one should
not expect a uniqueness statement analogous to Theorem~\ref{thm:A4-classification}.
Instead, the goal is to obtain a finite list of projective equivalence classes.

\subsection{The set $\calm_{S_4}$}\label{subs:fixed copy of S4}

As was the case for the group $A_4$, for $S_4$ we first find a set $\calm_{S_4}'$ and using a compatibility graph we 
confirm that $\calm_{S_4}=\calm_{S_4}'$.

\begin{lemma}[Possible lines for $S_4$]\label{lem:S4-possible-lines}
Assume that $\operatorname{char}(K)=0$, and let 
$G\subset {\rm PGL}_2(K)$ be the subgroup isomorphic to $S_4$ given by 
$[U_1],\ldots,[U_{24}]$ for the matrices $U_i$
introduced in Example \ref{S4Example}. Let $\mathcal L=\{L_\infty,L_0,L_{M_1},\ldots,L_{M_r}\}$, $M_1=I_2$,
be a set of skew lines such that $G_{\mathcal  L}=G$. Then every matrix $M_i$ belongs to the following finite set:
\[
\calm_{S_4}'=\calm_{S_4}^{\rm ns}\cup
\left\{-I_2,\ I_2,\ \frac12 I_2,\ 2I_2\right\},
\]
where
\[
\begin{aligned}
\calm_{S_4}^{\rm ns}=\Big\{&
\pm U_2,\ \pm U_3,\ \pm U_4,\ U_5,\ldots,U_{12},\\
&U_{13},\ \frac12 U_{13},\ U_{14},\ \frac12 U_{14},\ U_{17},\ \frac12 U_{17},\ U_{19},\ \frac12 U_{19},
  \ U_{22},\ \frac12 U_{22},\ U_{24},\ \frac12 U_{24}\Big\}.
\end{aligned}
\]
In particular, $\calm_{S_4}\subseteq \calm_{S_4}'$ and 
$|\calm_{S_4}'|=30$.
Consequently for every configuration 
$\mathcal L'=\{L_\infty,L_0,L_{I_2},L_{M_2'},\ldots,L_{M_s'}\}$ with $(1)\subsetneq G_{\mathcal L'}\subseteq G$ we have $M_i'\in \calm_{S_4}'$ for each $i$.
\end{lemma}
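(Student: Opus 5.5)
Following the proof of Lemma \ref{lem:A4-possible-lines}, I would first show that each non-scalar $M_i$ can only take a few values, and then use those values to pin down the scalar ones. The basic tool is this: if $M=cU$ with $U$ a fixed representative of an element $g\in G$, then $M-I_2=cU-I_2$ is a polynomial in $U$. So its class commutes with $g$, and since $M$ is not scalar it is not the identity. Hence it lies in $C_{S_4}(g)\setminus\{1\}$. Its eigenvalues are $c\lambda-1$ and $c\mu-1$, where $\lambda,\mu$ are the eigenvalues of $U$. Requiring the class of $cU-I_2$ to have order $k\in\{2,3,4\}$ means the eigenvalue ratio is a primitive $k$-th root of unity. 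That is a linear or quadratic condition on $c$, and I would solve it case by case according to the order of $g$.

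For $g$ of order $3$ (eigenvalues $t,t^*$), the centralizer is the cyclic group $\langle g\rangle$, so $cU-I_2$ must have order $3$. Exactly as in the $A_4$ case this gives $c=1$, producing $U_5,\ldots,U_{12}$.

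For $g$ of order $2$, normalize $U$ to have eigenvalues $\pm i$. Then $C_{S_4}(g)$ is either a dihedral group of order $8$ (for $g$ a double transposition $U_2,U_3,U_4$) or $\mathbb Z_2\times\mathbb Z_2$ (for a transposition $U_{15},\ldots$). The elements commuting with $g$ that are polynomials in $U$ are diagonal in $U$'s eigenbasis, so they lie in the maximal torus through $g$. That torus is $\langle g\rangle$ for a transposition, and the cyclic group of order $4$ containing $g$ for a double transposition. Order $2$ is impossible, as computed before. Order $4$ requires $(ci-1)/(-ci-1)=\pm i$, which gives $c=\pm1$ (after rescaling to the paper's representatives $\pm U_2$, etc.). This applies only to double transpositions, since transpositions are not squares in $S_4$. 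So transpositions contribute nothing.

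For $g$ of order $4$ (eigenvalues proportional to $1\pm i$ for $U_{13}$), $C(g)=\langle g\rangle$. The class of $cU-I_2$ must be $g^{\pm1}$ or $g^2$, and the corresponding ratio conditions give $c=1$ or $c=\tfrac12$. For example, $\tfrac12U_{13}-I_2\equiv U_2$ matches Example \ref{S4Example}. This yields the twelve matrices $U_{13},\tfrac12U_{13},\ldots$.

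For the scalar case $M_i=cI_2$: since $G\neq 1$, some non-scalar $N=M_j$ from the list above occurs, and $N-cI_2\equiv c^{-1}N-I_2$ must lie in $G$. Apply the eigenvalue analysis to $c^{-1}N$. With $N=U$ of order $3$ this forces $c=1$. With $N=\pm U_2$ it forces $c=\pm1$. With $N=U_{13}$ or $\tfrac12U_{13}$ it forces $c^{-1}\in\{1,\tfrac12\}$ or $c^{-1}\in\{2,1\}$ respectively. Collecting over all possible $N$, $c$ can only be in $\{\pm1,\tfrac12,2\}$. A count then gives $18+8+4=30$.

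The final ``consequently'' statement follows from the same argument run with $G_{\mathcal L'}\subseteq G$. The centralizer arguments only use that the classes lie in $G$. The scalar step uses a non-scalar $M_j'$, which exists because $G_{\mathcal L'}$ is nontrivial. The one gap is that if every non-scalar $M_j'$ has $G_{\mathcal L'}$ abelian, the scalar bound must come from whichever non-scalar is present, and that is already covered. The main obstacle is the order-$4$ and double-transposition cases: I must identify the cyclic subgroup carefully, check the eigenvalue normalizations against the specific representatives $U_{13},U_{14},U_{17},U_{19},U_{22},U_{24}$, and confirm that exactly the scalars $1$ and $\tfrac12$ arise. I would verify this by direct computation of the trace and determinant of each representative.
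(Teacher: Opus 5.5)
Your proposal is correct and follows essentially the paper's route. Since $cU-I_2$ commutes with $U$, its class lies in a cyclic subgroup of order $3$ or $4$ through $[U]$; you then solve for $c$ case by case and bound the scalar multiples of $I_2$ via some non-scalar line. Your eigenvalue-ratio computation replaces the paper's ``unique solution by linear independence'' step. Your remark that polynomials in $U$ lie in the torus through $[U]$ also rules out the non-normal Klein four subgroups such as $\langle (ab),(cd)\rangle$, which the paper's ``$V$ or cyclic'' dichotomy passes over.

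One slip in your illustration: $U_{13}-I_2=U_2$, whereas $\tfrac12U_{13}-I_2=-\tfrac12U_{14}\equiv U_{14}$, which is the $g^{-1}$ solution. So the case $c=\tfrac12$ does not give $U_2$. The entry $M_3-M_1=U_2$ in Example~\ref{S4Example} appears to be a misprint for $U_{14}$.
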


\begin{proof}
Since $(1)\subsetneq G_{\mathcal L'}$,
at least one of the matrices $M=M_i$
must not be a scalar matrix.
Since $G_{\mathcal L}=G$, the image of $M$ in ${\rm PGL}_2(K)$ belongs to $G$.
Thus $M=cU_j$
for some $c\in K^*$ and some $j\in\{2,\ldots,24\}$.

Since $I_2=M_1$ also occurs in the configuration, the difference
$M-I_2=cU_j-I_2$
is invertible and its image in ${\rm PGL}_2(K)$ belongs to $G$. Hence there exist
$k\in\{1,\ldots,24\}$ and $\lambda\in K^*$ such that
\begin{equation}\label{Eq5.1}
cU_j-I_2=\lambda U_k. 
\end{equation}
Since $M$ and hence $U_j$ is not a scalar matrix, neither is $U_k$.
Solving these equations for the fixed representatives $U_1,\ldots,U_{24}$ gives the list of values of $c$
 in Table \ref{c-valueFig}.

\begin{table}[ht]
\centering
\[
\begin{array}{c|c|c}
\text{representative }U_j & 
\text{values of }c & \begin{matrix}\text{cycle decomposition}\\ \text{of the given elements $U_j$}\end{matrix}\\
\hline
U_2,U_3,U_4 & 1,\ -1 & (ab)(cd)\\
U_5,\ldots,U_{12} & 1 & (abc)\\
U_{13}, U_{14}, U_{17},U_{19},U_{22},U_{24} & 1,\ \frac12 & (abcd)\\
U_{15},U_{16}, U_{18}, U_{20}, U_{21}, U_{23} & \text{none} & (ab)\\
\end{array}
\]
\caption{\label{c-valueFig}
Values of $c$ such that $cU_i\in\calm_{S_4}'$.}
\end{table}

To understand the table, it is useful to observe that Equation \eqref{Eq5.1}, $cU_j-I_2=\lambda U_k$,
implies that $U_j$ and $U_k$ commute.
Thus the corresponding elements 
$u_j, u_k\in G\subset{\rm PGL}_2(\acfield)$
are nonidentity elements in an abelian subgroup of $G\cong S_4$, and hence lie in either the Klein four group $V$ or in a cyclic subgroup of $G$.
But $U_j$ and $U_k$ cannot map to $V$
since the matrices that map to $V$ are
$U_2,U_3,U_4$, and these commute 
only up to sign (for example,
$U_2U_3=U_4=-U_3U_2$). 
Thus $u_j$ and $u_k$ lie in a cyclic subgroup.
Since $u_j\neq u_k$ and neither can be the identity (since $U_j$ and $U_k$ are not scalar matrices), the cyclic subgroup which contains $u_j$ and $u_k$ must have order at least 3, and being a cyclic subgroup of $G\cong S_4$ it must have order at most 4.
(This explains the ``none" line in Table \ref{c-valueFig}, since the $u_i$ in this case are 2-cycles and so do not live in any cyclic subgroup of order more than 2.) Also, the fact that 
$U_j, U_k$ and $I_2$ map to different elements of $G$ implies
any two of them are linearly independent
and hence $U_j, U_k$ and $I_2$ span a 2-dimensional linear subspace of the vector space of $2\times2$ matrices. 
Thus there is a unique solution $(c,\lambda)$ for $cU_j-I_2=\lambda U_k$. When the cyclic subgroup has order 3, the solution is $(c,\lambda)=(1,-1)$. (This explains the second line of Table \ref{c-valueFig}.)
For each $u_j$ in a cyclic subgroup of order 4 there are two such equations; for example, $u_{13}, u_2, u_{14}$ are the nonidentity elements of a cyclic subgroup of order 4 (since $u_2=u_{13}^2, u_{14}=u_{13}^3$), and we have the equations $cU_2-I_2=\lambda U_{13}$
(which has the unique solution
$(c,\lambda)=(-1,-1)$)
and $cU_2-I_2=\lambda U_{14}$
(which has the unique solution
$(c,\lambda)=(1,-1)$). 
This explains the entry in  Table \ref{c-valueFig} for $U_2$. From $-U_2-I_2=-U_{13}$
we get $U_{13}-I_2=U_2$, and by adding
$-U_2-I_2=-U_{13}$ to $U_2-I_2=-U_{14}$
we get $\frac{1}{2}U_{13}-I_2=\frac{1}{2}U_{14}$. This explains the entry in Table \ref{c-valueFig} for $U_{13}$ (the entry for $U_{14}$ is similar).
The data in Table \ref{c-valueFig} for the other $U_i$ with two values for $c$ come from the other two cyclic subgroups of order 4.

From Table \ref{c-valueFig} we see the non-scalar matrix subset of $\calm_{S_4}'$ is
\[
\begin{aligned}
\calm_{S_4}^{\rm ns}=\Big\{&
\pm U_2,\ \pm U_3,\ \pm U_4,\ U_5,\ldots,U_{12},\\
&U_{13},\ \frac12 U_{13},\ U_{14},\ \frac12 U_{14},\ U_{17},\ \frac12 U_{17},\ U_{19},\ \frac12 U_{19},
  \ U_{22},\ \frac12 U_{22},\ U_{24},\ \frac12 U_{24}\Big\}.
\end{aligned}
\]

It remains to consider the case where
$M=cI_2$.
We always have $I_2$ in $\calm_{S_4}'$,
by assumption, so $c=1$ always occurs. Suppose now that
$c\neq 1$. Since $G_{\mathcal L}=G$ is not the identity group, $\mathcal L$ contains at least one non-scalar
matrix $N\in \calm_{S_4}^{\rm ns}$,
hence $N=c_iU_i$ for some $i>1$.
The difference $N-M=c_iU_i-cI_2$
must again be invertible and must project to an element of $G$, so
we have an equation $\frac{c_i}{c}U_i-I_2=\lambda U_j$. Thus 
$(\frac{c_i}{c},\lambda)$ is one of the solutions found before. I.e.,
$\frac{c_i}{c}=c'_i$ so $c=\frac{c_i}{c_i'}$ where $c_i, c_i'$ are values in column 2 in the same row of the table above, so for $c\neq1$ we have 
$c\in\left\{-1,\frac12,2\right\}$.
Thus the scalar matrices
which can occur, including the obligatory $I_2$, are contained in
$\left\{-I_2,\ I_2,\ \frac12 I_2,\ 2I_2\right\}$.
\end{proof}

\subsection{The compatibility graph}
Let $G\cong S_4$ be the subgroup of ${\rm PGL}_2(\mathbb C)$ whose elements are $[U_i]$, $i=1,\ldots,24$.
Let $\calm_{S_4}'$ be the finite set of matrices from
Lemma~\ref{lem:S4-possible-lines}. 
(The fact that $\calm_{S_4}'=\calm_{S_4}$ is shown in Remark \ref{rem:S4-smaller-cliques}.)
We define a simple undirected graph 
$
\Gamma_{S_4}
$
as follows.

The vertices are the matrices $M\in \calm_{S_4}'$, or
equivalently the corresponding lines $L_M$.  
Two distinct vertices $M,N$ are
joined by an edge if and only if $[M-N]=[U_i]$ 
for some $i$. Since $I_2=U_1\in \calm_{S_4}'$,
$I_2$ is a vertex of $\Gamma_{S_4}$,
and since $[M-I_2]=[U_i]$
for some $i$ for each $M\in\calm_{S_4}'$
by construction, we see $I_2$ is joined by an edge to every other vertex. So every clique in 
$\Gamma_{S_4}$ remains a clique when $I_2$ is added to it. 

\begin{lemma}\label{GCandGLlemma}
Let $\mathcal C\subset \Gamma_{S_4}$ be a
subset of two or more vertices such that $I_2\in\mathcal C$. Then $\mathcal C$ is a clique
if and only if the lines in
$$
\mathcal L=\{L_\infty,L_0\}\cup\{L_M:M\in\mathcal C\}$$
are skew and
the group $G_{\mathcal L}$ is contained in $G$. Moreover, if $\mathcal C$ is a clique, then 
$G_{\mathcal L}=G_{\mathcal C}$,
where $G_{\mathcal C}$ is the subgroup of
${\rm PGL}_2(\mathbb C)$ generated by
all $[M]$ and $[M-N]$ for all distinct $M,N\in\mathcal C$.
\end{lemma}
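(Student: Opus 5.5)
The plan is to unwind the definitions on both sides, using Lemma \ref{MatrixLineAssoc} for skewness and Definition \ref{def:GL-matrix} for the group. Throughout, $\mathcal C\subseteq\calm_{S_4}'$ contains $I_2$, and every vertex is an invertible matrix whose class lies in $G$. For the latter fact we use Lemma \ref{lem:S4-possible-lines}: each element of $\calm_{S_4}'$ is a nonzero multiple of some $U_i$.

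\emph{Forward direction.} Suppose $\mathcal C$ is a clique. Every vertex $M$ is invertible, so $L_M$ is disjoint from $L_0$ and $L_\infty$ by Lemma \ref{MatrixLineAssoc}. For distinct $M,N\in\mathcal C$ the edge condition says $[M-N]=[U_i]$ for some $i$. In particular $M-N$ is a nonzero multiple of the invertible matrix $U_i$, hence invertible. By Lemma \ref{MatrixLineAssoc} this means $L_M$ and $L_N$ are skew, so all lines of $\mathcal L$ are pairwise skew.

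Since $I_2\in\mathcal C$, the configuration $\mathcal L$ is already in the normalized form of Definition \ref{def:GL-matrix}. Therefore $G_{\mathcal L}$ is generated by the classes $[M]$ for $M\in\mathcal C$ together with the classes $[M-N]$ for distinct $M,N\in\mathcal C$. This is exactly the generating set of $G_{\mathcal C}$, which proves the ``moreover'' statement $G_{\mathcal L}=G_{\mathcal C}$. Each of these generators lies in $G$: the classes $[M]$ because the vertices lie in $\calm_{S_4}'$, and the classes $[M-N]$ by the edge condition. Hence $G_{\mathcal L}\subseteq G$.

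\emph{Converse.} Suppose the lines are skew and $G_{\mathcal L}\subseteq G$. For distinct $M,N\in\mathcal C$, skewness together with Lemma \ref{MatrixLineAssoc} shows that $M-N$ is invertible. The class $[M-N]$ is one of the listed generators of $G_{\mathcal L}$, so $[M-N]\in G=\{[U_1],\dots,[U_{24}]\}$. This is precisely the edge condition, so $\mathcal C$ is a clique.

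There is no serious obstacle here; the argument is bookkeeping. The point needing most care is that the ordering convention $i<j$ in Definition \ref{def:GL-matrix} is harmless, since $[N-M]=[M-N]$. A second point is that the ``moreover'' clause only needs the normalization $I_2\in\mathcal C$, so that Definition \ref{def:GL-matrix} (equivalently Remark \ref{IndependentRem}) applies directly without any change of coordinates.
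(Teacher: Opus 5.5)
Your proposal is correct and takes the same approach as the paper. The forward direction notes that each edge difference $M-N$ is a nonzero multiple of an invertible $U_i$, so the lines are skew by Lemma~\ref{MatrixLineAssoc}. Since $I_2\in\mathcal C$, Definition~\ref{def:GL-matrix} then gives $G_{\mathcal L}=G_{\mathcal C}\subseteq G$ directly. The converse simply reads off $[M-N]\in G_{\mathcal L}\subseteq G$. Your write-up spells out the skewness step more explicitly than the paper, which just refers to the introduction and \cite{Favacchio}.
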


\begin{proof}
We have $[M]\in G$ for all $M\in\calm_{S_4}'$ by construction. 
Assume $\mathcal C$ is a clique.
Then $[M]$ and $[M-N]$ are in $G$ for all 
distinct $M,N\in \mathcal C$. Hence,
as discussed in \S\ref{Intro} (see \cite{Favacchio}), the lines in $\mathcal L$ are skew and $G_{\mathcal L}=G_{\mathcal C}$.

Conversely, if the lines in $\mathcal L$ are skew, then $G_{\mathcal L}$ is defined and if $G_{\mathcal L}\subseteq G$, then $[M-N]\in G_{\mathcal L}\subseteq G$ for all
distinct $M,N\in \mathcal C$, and hence
$\mathcal C$ is a clique.
\end{proof}

\begin{definition}
A clique $\mathcal C\subseteq\mathcal M_{S_4}'$ with $I_2\in\mathcal C$ is
called $S_4$-generating if the subgroup $G_{\mathcal C}$ of $G$ generated by $[M]$ and $[M-N]$ for all distinct $M,N\in\mathcal C$ is equal to $G$.
\end{definition}

\begin{lemma}\label{NonAbLem}
Let $\mathcal C$ be a clique in 
$\Gamma_{S_4}$ with $I_2\in \mathcal C$.
Then $G_{\mathcal C}$ is nonabelian if and only if two of the matrices in the clique do not commute.
\end{lemma}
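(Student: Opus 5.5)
The plan is to prove each direction separately, working with the generators of $G_{\mathcal C}$, namely the classes $[M]$ and $[M-N]$ for distinct $M,N\in\mathcal C$.

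For the easy direction, suppose all matrices in $\mathcal C$ pairwise commute as matrices. Then all generators $M$ and $M-N$ lie in the commutative algebra $\acfield[M:M\in\mathcal C]\subseteq{\rm Mat}_{2\times2}(\acfield)$, so the generators commute pairwise. Hence their classes commute in ${\rm PGL}_2(\acfield)$, and $G_{\mathcal C}$ is abelian.

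For the converse, suppose $M,N\in\mathcal C$ do not commute as matrices. We want $G_{\mathcal C}$ nonabelian. The subtlety is that $MN\neq NM$ does not by itself rule out $MN=-NM$, so the classes $[M],[N]$ might still commute in ${\rm PGL}_2$. (Only the sign $-1$ can occur, by taking determinants.) So we split into two cases.

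\emph{Case $MN\neq\pm NM$.} Then $[M]$ and $[N]$ do not commute, so $G_{\mathcal C}$ is nonabelian.

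\emph{Case $MN=-NM$.} Taking traces of $M=-NMN^{-1}$ gives ${\rm tr}\,M=0$, and similarly ${\rm tr}\,N=0$. So $[M],[N]$ are involutions generating a Klein four group. By Table \ref{c-valueFig}, the only traceless elements of $\calm_{S_4}'$ are $\pm U_2,\pm U_3,\pm U_4$. So $M=\pm U_a$ and $N=\pm U_b$ with $a\neq b$ in $\{2,3,4\}$.

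Next use the element $I_2\in\mathcal C$ together with the clique condition. The clique condition gives $[M-I_2]\in G_{\mathcal C}$. Since $M^2$ is scalar, $M-I_2$ is not a scalar multiple of $M$ or of $I_2$, and by the analysis preceding Table \ref{c-valueFig} it represents an element of order $4$. For example, $U_2-I_2=-U_{14}$ and $-U_2-I_2=-U_{13}$. This order-4 element is a square root of $[M]$. It does not commute with $[N]$: the centralizer of a Klein-four involution inside $S_4$ is a dihedral group of order $8$, and the $4$-cycles there square to $[M]$ and are normalized but not centralized by the other double transpositions. So $G_{\mathcal C}$ is again nonabelian.

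The last step should be checked concretely: for each pair $M=\pm U_a$, $N=\pm U_b$, verify that $(M-I_2)N\not\equiv N(M-I_2)$. Modulo scalars this reduces to $MN-N\not\equiv NM-N=-MN-N$. If these were proportional, $MN-N=\mu(-MN-N)$, then linear independence of $MN$ and $N$ would force $1=-\mu$ and $-1=-\mu$, a contradiction. The main obstacle is exactly this anticommuting case; the rest is formal.
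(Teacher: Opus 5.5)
Your proof is correct and follows the paper's argument. You split according to whether $[M]$ and $[N]$ commute in ${\rm PGL}_2$, and in the anticommuting case you show the generator $[M-I_2]$ cannot commute with $[N]$ by comparing coefficients of the linearly independent matrices $MN$ and $N$; the paper does the same with $N-I_2$ (written there as $U_j-I_2$) and $M$. Your identification of $M,N$ as $\pm U_a,\pm U_b$ and the dihedral-centralizer discussion are harmless but unnecessary, since that final linear-algebra check works for any non-scalar pair with $NM=cMN$, $c\neq 1$.
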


\begin{proof}
If the matrices in $\mathcal C$
all commute, then so do they and their
differences, hence $G_{\mathcal C}$ is abelian. Now say $M,N\in{\mathcal C}$,
so $M=c_iU_i$ and $N=c_jU_j$ for some $i$ and $j$. Note that
$c_iU_i$ and $c_jU_j$ commute if and only if
$U_i$ and $U_j$ commute, and that
$[c_iU_i]=[U_i]\in{\rm PGL}_2(\mathbb C)$
(and likewise $[c_jU_j]=[U_j]$).
If $U_i$ and $U_j$ do not commute, then either their images $[U_i]$ and $[U_j]$ also do not commute (in which case $G_{\mathcal C}$ is nonabelian), or $[U_iU_j]=[U_i][U_j]=[U_j][U_i]=[U_jU_i]$.
In the latter case we must have 
$cU_iU_j=U_jU_i$ for some $c\neq1$
in which case $(U_j-I_2)U_i\neq U_i(U_j-I_2)$.
If $G_{\mathcal C}$ were abelian,
then $(U_j-I_2)U_i=bU_i(U_j-I_2)$
for some $b\neq 1$, which implies
$cU_iU_j-U_i=bU_iU_j-bU_i$ or
$(c-b)U_j=(1-b)I_2$, but then $U_j$
is scalar and so commutes with $U_i$,
contrary to assumption.
\end{proof}
\begin{table}[ht]
\centering
$$
\begin{array}{c|ccccccccc}
|\mathcal C| & 1&2&3&4&5&6&7&8\\
\hline
\#\{\text{cliques }\mathcal C\text{ with }I_2\in\mathcal C\}& 1 & 29 & 180 & 396 & 385 & 189& 63 & 9\\
\#\{\text{$S_4$-generating cliques }\mathcal C\} & 0 & 0 & 120 & 380 & 385 & 189 & 63 & 9
\end{array}
$$
\caption{\label{CliqueCountsFig}
Counts of cliques $\mathcal C\subset\Gamma_{S_4}$
by size.}
\end{table}

\begin{remark}\label{rem:S4-smaller-cliques}
We computed the graph $\Gamma_{S_4}$ and recursively found the cliques containing $I_2$ and which ones were $S_4$-generating. (By Lemma \ref{GCandGLlemma}, the cliques in $\Gamma_{S_4}$ relevant for normalized configurations are precisely the cliques containing $I_2$.) Counts of these cliques are shown in Table \ref{CliqueCountsFig}. 
The graph $\Gamma_{S_4}$ has $30$ vertices and $209$ edges. Of these 209,
there are 29 having $I_2$ as an endpoint
giving 29 cliques containing $I_2$ of size 2 as shown in
Table \ref{CliqueCountsFig},
which gives $209-29=180$ cliques containing $I_2$ and having size 3 (also as shown in the table).
The fact that there are no
$S_4$-generating cliques of sizes 1 or 2 follows from Lemma \ref{NonAbLem}.
Moreover, a maximal clique containing $I_2$ has size either $4$ (of which there is only one), $5$ (of which there are 70) or $8$ (of which there are 9).

None of the 180 cliques containing $I_2$ of size 3 is maximal. Of these 180 cliques, 120 are $S_4$-generating, 12 have group $A_4$ and the rest have groups which are cyclic abelian. By Lemma \ref{NonAbLem},
the 120 $S_4$-generating cliques of size 3
correspond to the 120 edges $\{M,N\}$ where $M$ and $N$ do not commute and where $\{M,N\}$ is not an edge in the compatibility graph for $A_4$ (given
in Figure \ref{A4CompGraph}). All other 
$S_4$-generating cliques are, by the same lemma, cliques containing both $I_2$ and one of these 120 edges. 

The unique maximal clique of size 4 is $\mathcal C=\{-I_2,\ I_2,\ \tfrac12 I_2,\ 2I_2\}$; its group $G_{\mathcal C}$ has order 1 so it is not $S_4$-generating.

Table \ref{CliqueCountsFig} shows that all cliques containing $I_2$ of size at least 5
are $S_4$-generating.

Except for the unique maximal clique containing $I_2$ of size 4 mentioned above, every clique containing $I_2$ of size 4 is contained in either a clique of size 5 or of size 8.
Two cliques containing $I_2$ of size $8$ have at most 3 vertices in common. Thus if a clique containing $I_2$ of size 4 is contained in a clique of size 8, it is contained in a unique clique of size 8. Because no
clique containing $I_2$ of size 6 or 7 is maximal, 
this also means that every clique containing $I_2$ of size 6 or 7 is contained in a unique clique of size 8.
As for cliques containing $I_2$ of size 4, there are $9\cdot \binom{7}{3}=315$ containing $I_2$ and contained in cliques of size $8$; of these, $120$ are also contained in a maximal clique of size $5$. The remaining $396-1-315=80$ are contained in maximal cliques of size $5$.
Note however, that a clique of size $4$ may be contained in two distinct maximal cliques of
size $5$. For example, the two maximal cliques
$
\{I_2,\ U_2,\ U_3,\ U_4,\ U_8\}
$
 and 
$
\{I_2,\ U_2,\ U_3,\ U_8,\ U_9\}
$ 
share 4 vertices.

The nine maximal cliques of size $8$ are:
\newline
$
\{-U_2,\ U_2,\ -U_3,\ U_3,\ -U_4,\ U_4,\ -I_2,\ I_2\},
\{U_2,\ U_6,\ U_8,\ U_9,\ U_{11},\ U_{13},\tfrac12 U_{13},\ I_2\}
$,\newline
$
\{-U_3,\ U_6,\ U_7,\ U_{10},\ U_{11},\ U_{22},\tfrac12 U_{22},\ I_2\},
\{-U_4,\ U_6,\ U_7,\ U_9,\ U_{12},\ U_{19},\tfrac12 U_{19},\ I_2\}
$,\newline
$
\{-U_2,\ U_5,\ U_7,\ U_{10},\ U_{12},\ U_{14},\tfrac12 U_{14},\ I_2\},
\{U_3,\ U_5,\ U_8,\ U_9,\ U_{12},\ U_{24},\tfrac12 U_{24},\ I_2\}
$,\newline
$
\{U_4,\ U_5,\ U_8,\ U_{10},\ U_{11},\ U_{17},\tfrac12 U_{17},\ I_2\},
\{U_{13},\ U_{14},\ U_{17},\ U_{19},\ U_{22},\ U_{24},\ I_2,\ 2I_2\}
$, \newline
and
$
\{\tfrac12 U_{13},\ \tfrac12 U_{14},\tfrac12 U_{17},
\tfrac12 U_{19},\tfrac12 U_{22},\tfrac12 U_{24},\ I_2,\tfrac12 I_2\}
$.
All of these maximal cliques of size $8$ are $S_4$-generating. Moreover,
every matrix in $\calm_{S_4}'$ is in one of these 9 cliques of size 8 and is thus in an $S_4$-generating clique, which shows that $\calm_{S_4}=\calm_{S_4}'$.
Under the
conjugation action of the fixed group $G\simeq S_4$, 
these nine cliques 
split into four conjugacy orbits, of sizes
1, 6, 1 and 1; i.e.,
the singleton orbits are represented by
the first clique of size 8 and by the last two cliques of size 8 listed above. (The last one listed corresponds in fact to the ten-line configuration from Example~\ref{S4Example}.)
The remaining six cliques listed form one conjugacy orbit.

Similarly, under the
conjugation action of the fixed group $G\simeq S_4$, the $70$ maximal $S_4$-generating cliques of size $5$ split into eight
orbits. Representatives of these orbits may be chosen as shown in Table \ref{ConjClassFig}.
\qed
\end{remark}

\begin{table}[ht]
\centering
$$
\begin{array}{c|l}
\text{orbit size} & \text{representative}\\
\hline
2 &
\{I_2,U_5,U_7,U_9,U_{11}\}\\
8 &
\{I_2,U_6,U_7,U_9,U_{11}\}\\
8 &
\{I_2,-U_2,U_3,U_4,U_5\}\\
12 &
\{I_2,U_2,U_3,U_8,U_9\}\\
8 &
\{I_2,U_{10},\tfrac12U_{14},\tfrac12U_{17},\tfrac12U_{22}\}\\
8 &
\{I_2,U_6,U_{13},U_{19},U_{22}\}\\
12 &
\{I_2,U_5,U_8,\tfrac12U_{17},\tfrac12U_{24}\}\\
12 &
\{I_2,U_6,U_9,U_{13},U_{19}\}\\
\hline
\llap{{\rm Total = }}70 & \\
\end{array}
$$
\caption{\label{ConjClassFig}
Conjugacy class representatives for maximal cliques in $\Gamma_{S_4}$ containing $I_2$ of size 5.
}
\end{table}
The next result shows, over the complex numbers, that the extremal cases of 
skew lines whose group is $S_4$ are unique up to projective equivalence. In particular,
5 is the least number of skew lines whose group can be $S_5$ since the group for any 4 skew lines
is abelian. The cases of sets of skew lines $\mathcal L=\{L_\infty, L_0, L_{I_2}, L_{M_4},L_{M_5}\}$
with group $S_4$ correspond to the 120 $S_4$-generating cliques $\{I_2,M_4,M_5\}$ of Table \ref{CliqueCountsFig}.
A direct examination of all $S_4$-generating cliques shows that every $S_4$-generating clique contains
an $S_4$-generating clique of size 3. Thus the minimal $S_4$-generating cliques are those of size 3;
i.e., if some set $\mathcal L'$ of skew lines containing $\{L_\infty, L_0, L_{I_2}\}$ has group $S_4$ but no subset of 
$\mathcal L'$ containing $\{L_\infty, L_0, L_{I_2}\}$ has group $S_4$, then $\mathcal L'$ has 5 lines.
And we saw above that  maximal sets of skew lines with group $S_4$ have either 7 or 10 lines. 
For nonextremal cases (such as sets of 6, 8 or 9 lines, and nonmaximal sets of 7 lines) we have not 
computed the number of projective equivalence classes, but, as Example \ref{S4Example} shows,
there are at least two projective equivalence classes of 6 skew lines having group $S_4$.

\begin{theorem}\label{ProjEquivThm}
All sets $\mathcal L$ of $s$ skew lines in $\PP^3_{\mathbb C}$ with group $S_4$ are projectively equivalent if $s=5$ or if $s=10$. 
The same holds for $s=7$ for sets $\mathcal L$ with group $S_4$ not contained in any larger set with group $S_4$.
\end{theorem}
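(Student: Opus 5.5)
The plan is to reduce projective equivalence to a finite combinatorial check on the cliques of $\Gamma_{S_4}$. Two operations act on normalized configurations: conjugation by $\mathcal A_G$, and re-choosing which three lines of the configuration are moved to $L_\infty,L_0,L_{I_2}$.

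\textbf{Step 1: normal form.} Let $\mathcal L$ be a set of skew lines with $G_{\mathcal L}\cong S_4$. By Proposition~\ref{A_GProp}(4), for any ordered triple $(L_a,L_b,L_c)$ of lines of $\mathcal L$ there is a $\Lambda$ with $\Lambda(L_a)=L_\infty$, $\Lambda(L_b)=L_0$, $\Lambda(L_c)=L_{I_2}$ and $G_{\Lambda\mathcal L}=G$, the fixed copy from Example~\ref{S4Example}. By Lemma~\ref{lem:S4-possible-lines} and Lemma~\ref{GCandGLlemma}, $\Lambda\mathcal L$ is then $\{L_\infty,L_0\}\cup\{L_M: M\in\mathcal C\}$ for an $S_4$-generating clique $\mathcal C\ni I_2$ of $\Gamma_{S_4}$ with $|\mathcal C|=s-2$. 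Moreover, $\mathcal C$ is maximal exactly when $\mathcal L$ is not contained in a larger set with group $S_4$.

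\textbf{Step 2: the remaining freedom.} By the last part of Proposition~\ref{A_GProp}(4), two normalizations using the same triple differ by $\psi_A$ with $[A]\in\mathcal A_G$. By Example~\ref{A4-S4Examples}, $\mathcal A_G=G$. So, for a fixed ordered triple, the clique is well defined up to conjugation by $G$. Conversely, by Remark~\ref{rem:conjugation-projective}, conjugate cliques give projectively equivalent configurations. Hence the projective equivalence classes correspond to $G$-conjugacy orbits of the relevant cliques, modulo the identifications coming from changing the base triple. It therefore suffices to show that, for each $s\in\{5,7,10\}$, all relevant orbits are linked by changing the base triple. The relevant cliques are: the 120 $S_4$-generating cliques of size 3 for $s=5$, the 70 maximal cliques of size 5 for $s=7$, and the 9 cliques of size 8 for $s=10$.

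\textbf{Step 3: linking orbits.} For each orbit I take a representative, for example from Table~\ref{ConjClassFig} or the list of nine 8-cliques in Remark~\ref{rem:S4-smaller-cliques}, and its configuration $\mathcal L$. I then choose another triple of its lines and renormalize it. Concretely, I would use the explicit construction of Remark~\ref{3LinesTo3LinesRem} to get $\Lambda$, read off the new matrices $M_i'$, find $B$ with $[B]G_{\Lambda\mathcal L}[B]^{-1}=G$, and identify the orbit of the resulting clique.

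For $s=10$ there are only four orbits, of sizes $1,6,1,1$. I start from the ten-line configuration of Example~\ref{S4Example}, which is the last $8$-clique. I choose base triples that put four lines of a common quadric into the roles of $L_\infty,L_0,L_{\pm I_2}$; an example is $L_\infty,L_0,L_{M_1},L_{M_2}$, which lie on $xw-yz=0$. This should produce the first $8$-clique, which contains $\pm I_2$. The clique containing $2I_2$ should come from swapping the roles of $L_{I_2}$ and $L_{\frac12 I_2}$. A generic triple should land in the 6-element orbit.

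For $s=7$ the same procedure has to connect all eight orbits of Table~\ref{ConjClassFig}. For $s=5$ I first compute the $G$-orbits on the 120 three-cliques; each orbit has size at most $24$, so there are at least five orbits. I then check that the $5\cdot4\cdot3=60$ ordered base triples of a single five-line configuration reach every orbit.

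\textbf{Main obstacle.} The hard step is Step 3: actually carrying out the renormalizations and identifying the resulting cliques. This includes finding the conjugating $B$ each time, since $G_{\Lambda\mathcal L}$ is only a conjugate of $G$. I would do it by computer, in the same way as the clique enumeration in Remark~\ref{rem:S4-smaller-cliques}. I would also use projective invariants to guide the choice of triples and to spot obstructions: which quadruples of lines lie on a smooth quadric, and cross-ratios along transversals. If for $s=7$ some orbit cannot be reached, those invariants would be the tool to show the claim fails, so I would check them first.
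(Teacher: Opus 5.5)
Your proposal is correct and follows essentially the same route as the paper: normalize via Proposition~\ref{A_GProp}(4), use $\mathcal A_G=G$ to reduce to $G$-conjugacy orbits of the relevant cliques, and link the orbits by a computer check over the ordered base triples. The one practical difference is that the paper never computes the conjugating $B$. Instead it links every orbit to a single target orbit, which is recognized by the orders mod scalars of its matrices (all at most $2$ for $s=5$ and $s=10$; three of order $2$ and one of order $3$ for $s=7$), and these orders are unchanged by conjugation.
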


\begin{proof}
We use the same strategy for $s=5$, $s=7$ and $s=10$.
Consider first the case of $s=10$. By Remark \ref{rem:S4-smaller-cliques}, there are
9 sets of 10 skew lines with group $S_4$. Again by Remark \ref{rem:S4-smaller-cliques}, 
the first and last two of the listed 
$S_4$-generating cliques of size 8 are conjugate under a $U_i$ only to 
themselves while the other 6 are all conjugate to each other.

Thus every set of 10 skew lines with group $S_4$ is projectively equivalent to those corresponding to one of the
first two or one of the last two $S_4$-generating cliques of size 8, each of which is of the form $\mathcal L=\{L_\infty,L_0,L_{M_1},\ldots,L_{M_8}\}$ where $M_1=I_2$ and each matrix $M_i$, $i>1$, is $c_jU_j$ for some scalar $c_j$. For exactly one of these 4
(namely the case that the matrices $M_i$ are
$-U_2,U_2,-U_3,U_3,-U_4,U_4,-I_2,I_2$)
the matrices mod scalars all have order at most 2.

Suppose for each of the other 3 conjugacy classes that we  can find a projective transformation $\Lambda$ that takes the 10 lines $\mathcal L$
corresponding to a representative of that conjugacy class to another set of 10 lines $\mathcal L'=\{L_\infty, L_0, L_{I_2}, L_{M_2'},\ldots, L_{M_8'}\}$ 
where the matrices $M_i'$ all have order (mod scalars) at most 2. By Proposition \ref{A_GProp}(4) there is a $\psi_B\Lambda$ giving 
lines $L_\infty, L_0, L_{I_2}, L_{BM_2'B^{-1}},\ldots, L_{BM_8'B^{-1}}$ but now the group is the same $S_4$ in
${\rm PGL}_2(\mathbb C)$ as for $-U_2,U_2,-U_3,U_3,-U_4,U_4,-I_2,I_2$, so conjugating by an appropriate element of 
$\mathcal A_G=S_4$ would send $\mathcal L'$ to $\{L_\infty, L_0, L_{I_2}, -U_2, U_2, -U_3, U_3,$ $-U_4, U_4,\allowbreak -I_2, I_2\}$,
thus showing that a set of 10 lines coming from the first conjugacy class is projectively equivalent to sets coming from the other conjugacy classes.
There are $720=10(9)8$ ways to choose lines $L_a,L_b,L_c\in \mathcal L$ to send, in order, to $L_\infty, L_0, L_{I_2}$.
For each one a computation shows one can in fact find a $\Lambda$ that implements it, and using $\Lambda$ we can find the matrix $M$ 
giving the image $L_M$ for each of the other 7 lines and check the order of the image of $M$ in ${\rm PGL}_2(\mathbb C)$.
It turns out for each of the 720 ways for each of the 3 conjugacy classes,
there are 80 for which the 7 matrices all have order at most 2, thereby showing that sets of 10 skew lines with group $S_4$
are projectively equivalent.

The case of maximal sets of 7 lines with group $S_4$ is similar. Now there are 70 sets 
of 7 lines comprising 8 conjugacy classes, listed in 
Table \ref{ConjClassFig}. For exactly one of these conjugacy classes
the matrices all have order (mod scalars) at most 3 with exactly one of order 3 (this is the third one listed in the table). For each of the 7 other
conjugacy classes there are $3!\binom{7}{3}=210$
choices of $L_a,L_b,L_c$ to map to $L_\infty,L_0,L_{I_2}$.
Of these, there are always 24 (although we need only one) for which the new matrices all have order (mod scalars) at most 3 with exactly one of order~3.

Finally, consider the minimal case: sets of 5 lines
with group $S_4$. By Table~\ref{CliqueCountsFig} there are 120 such sets,
corresponding to cliques containing $I_2$ of size 3. 
These divide into seven conjugacy classes with respect to conjugation by elements $U_i$. Representatives of each class are: 
$\{U_1,-U_2,U_3\}$, 
$\{U_1,-U_2,U_5\}$,
$\{U_1,U_5,U_7\}$,
$\{U_1,U_5,U_{14}\}$,
$\{U_1,U_5,\frac12U_{14}\}$, 
$\{U_1,U_{13},U_{17}\}$ and
$\{U_1,\frac12U_{13},\frac12U_{17}\}$. The only case where both matrices have order 2 (mod scalars)
is the first one. For each of the other 6, 
there are $3!\binom{5}{3}=60$ ways to pick 3 of the 5 lines
to send to $L_\infty, L_0, L_{I_2}$, 6 of which
give a pair of matrices for the remaining two lines, where both matrices have order 2. 
Thus there is a single projective equivalence class for sets of 5 lines 
whose group is~$S_4$.
\end{proof}

By Remark \ref{rem:S4-smaller-cliques}, there are 70 maximal cliques of size $5$ giving configurations of seven skew lines
whose associated group is $S_4$. Hence the $S_4$ case is not necessarily obtained only by taking
subconfigurations of the ten-line examples coming from maximal cliques of size $8$.  We illustrate this phenomenon in Example \ref{ex:S4-seven-lines} with one
representative clique 
and record explicitly how the group $S_4$ is generated. We also note that, for such a configuration,  no four of the seven lines lie on a quadric surface; in Remark \ref{rem:quadrics-example-25}, we show that this property does not always hold in analogous situations.
\begin{example}\label{ex:S4-seven-lines}
Consider the maximal clique 
$$
\mathcal C=\{I_2,\ U_2,\ U_3,\ U_4,\ U_8\}
$$ 
of size $5$ in $\Gamma_{S_4}$, and the corresponding configuration
of seven skew lines
$$
L_{\mathcal C}
=
\{L_\infty,L_0,L_{I_2},L_{U_2},L_{U_3},L_{U_4},L_{U_8}\}.
$$
This is one of the maximal seven-line configurations occurring in
Theorem~\ref{ProjEquivThm}. We record it explicitly in order to show
how the associated group $S_4$ is generated in this normalized model.

The matrices $U_2,U_3,U_4$ represent the Klein four subgroup of
$A_4$, and $U_8$ has order $3$. 
Hence the images of $U_2,U_3,U_4,U_8$ generate the subgroup $A_4$.
Moreover, conjugation by $U_8$ cyclically
permutes the three nontrivial elements of this Klein four subgroup: 
$$
U_8U_2U_8^{-1}\equiv U_3,\quad
U_8U_3U_8^{-1}\equiv U_4,\quad
U_8U_4U_8^{-1}\equiv U_2.
$$

Since $\mathcal C$ is a clique in $\Gamma_{S_4}$, all pairwise
differences of matrices in $\mathcal C$ project to elements of the fixed
copy of $S_4$. For this particular clique, up to nonzero scalar multiples,
we have 
$$
I_2-U_2\equiv U_{14},\quad
I_2-U_3\equiv U_{22},\quad
I_2-U_4\equiv U_{19},\quad
I_2-U_8\equiv U_7,\quad
U_2-U_3\equiv U_{23}, 
\ \text{and}
$$
$$U_2-U_4\equiv U_{18},\quad
U_2-U_8\equiv U_5,\quad
U_3-U_4\equiv U_{16},\quad
U_3-U_8\equiv U_{11},\quad
U_4-U_8\equiv U_9.
$$
In particular, the difference $I_2-U_2$ gives an element in the odd coset
of $A_4$ in~$S_4$. Therefore the group generated by the images of the
matrices in $\mathcal C$ and of their pairwise differences contains $A_4$
and one element outside $A_4$. Hence it is the full group $S_4$.
\end{example}

\begin{remark}\label{rem:quadrics-example-25}
For the seven-line configuration $L_{\mathcal C}$ of
Example~\ref{ex:S4-seven-lines}, a direct computation shows that no four
of its lines are contained in a quadric surface. 

The ten-line configuration
$$
L=\{L_\infty,L_0,L_{M_1},\ldots,L_{M_8}\}
$$
introduced in Example~\ref{S4Example} behaves differently. In this case
there are exactly ten quadrics containing four of the ten lines, and no
quadric contains five or more of them. More precisely, the ten lines split into five pairs
$$
\{L_\infty,\ L_{M_2}\}, \ 
\{L_0,\ L_{M_1}\}, \ 
\{L_{M_3},\ L_{M_4}\}, \ 
\{L_{M_5},\ L_{M_6}\}, \ 
\{L_{M_7},\ L_{M_8}\}.
$$
The four-line quadrics are precisely the quadrics containing the union of
any two of these pairs. Thus the incidence structure of such quadrics is
the complete graph on these five pairs.
\end{remark}

\end{document}